\newtheorem{definition}{Definition}[section]
\newtheorem{theorem}[definition]{Theorem}
\newtheorem{lemma}[definition]{Lemma}
\newtheorem{remark}[definition]{Remark}
\newtheorem{corollary}[definition]{Corollary}
\newtheorem{proposition}[definition]{Proposition}
\newcommand{\M}{\mathcal{M}}
\newcommand{\MB}{\mathcal{BM}}
\newcommand{\MBR}{\mathcal{BM}_{(\Phi_1,\Phi_2,\Phi_3)}(\R)}
\newcommand{\mul}{\mathcal{\tilde M}_{(\Phi_1,\Phi_2,\Phi_3)}(\R)}
\newcommand{\C}{\mathbb{C}}
\newcommand{\N}{\mathbb{N}}
\newcommand{\R}{\mathbb R}
\newcommand{\ee}{\end{equation}}
\newcommand{\be}{\begin{equation}}
\newcommand{\ea}{\end{eqnarray*}}
\newcommand{\ba}{\begin{eqnarray*}}
\begin{document}

\def\dem{\noindent {\sf  Proof.\ \ }}
\def\qed{\hfill $\blacksquare$}

\textwidth 16cm  
\textheight 21cm \headsep 1.75cm \topmargin -1.5cm \oddsidemargin
0in \evensidemargin 0in

\title{Notes on bilinear multipliers on Orlicz spaces}

\author{Oscar Blasco}

\address{Department of Mathematics,
Universitat de Valencia, Burjassot 46100 (Valencia)
 Spain}
\email{oscar.blasco@uv.es}

\author{Alen Osancliol}

\address{Department of Mathematics,
Universitat de Valencia, Burjassot 46100 (Valencia)
 Spain}
\email{alen.osancliol@ext.uv.es}

\keywords{bilinear multipliers, Orlicz spaces}
\thanks{
The first author is partially supported by  Proyecto MTM2014-53009-P(MINECO Spain) and the second author is supported by "The Scientific and Technological Research Council of Turkey" TUBITAK-BIDEB grant no 1059B191600535} \maketitle

\begin{abstract} Let $\Phi_1 , \Phi_2 $ and $ \Phi_3$ be Young functions and let $L^{\Phi_1}(\mathbb{R})$, $L^{\Phi_2}(\mathbb{R})$ and $L^{\Phi_3}(\mathbb{R})$ be the corresponding Orlicz spaces.
We say that a  function $m(\xi,\eta)$ defined on $\mathbb{R}\times \mathbb{R}$ is a bilinear multiplier of type $(\Phi_1,\Phi_2,\Phi_3)$ if
\[ B_m(f,g)(x)=\int_\mathbb{R} \int_\mathbb{R} \hat{f}(\xi) \hat{g}(\eta)m(\xi,\eta)e^{2\pi i (\xi+\eta) x}d\xi d\eta \]
defines a bounded bilinear operator from $L^{\Phi_1}(\mathbb{R}) \times L^{\Phi_2}(\mathbb{R})$ to $L^{\Phi_3}(\mathbb{R})$. We denote by $BM_{(\Phi_1,\Phi_2,\Phi_3)}(\mathbb{R})$ the space of all bilinear multipliers of type $(\Phi_1,\Phi_2,\Phi_3)$ and investigate some properties of such a class. Under some conditions on the triple $(\Phi_1,\Phi_2,\Phi_3)$ we give some examples of bilinear multipliers of type $(\Phi_1,\Phi_2,\Phi_3)$. We will focus on the case $m(\xi,\eta)=M(\xi-\eta) $ and get necessary conditions on $(\Phi_1,\Phi_2,\Phi_3)$ to  get non-trivial multipliers in this class. In particular  we recover some of the the known results for Lebesgue spaces.
\end{abstract}


\

\section{Introduction.}

The theory of bilinear multipliers was originated
in the work by R. Coiffman and C. Meyer (\cite{CM}) in the eighties of the last century and continued by L. Grafakos and R. Torres (\cite{GT}) and many others. A renewed interest appeared in the
nineties after the celebrated result by M. Lacey and C. Thiele (\cite{LT1, LT2}), solving the old standing conjecture of Calder\'on on the boundedness of the bilinear
Hilbert transform. Let us recall that for a couple of functions  $f, g:\R\to \C$ such that $\hat f$ and $\hat g$ are compactly supported and  for any locally integrable function $m(\xi,\eta)$ defined on $\mathbb{R}\times \mathbb{R}$ one can consider the mapping
\be\label{bm} B_m(f,g)(x)=\int_\mathbb{R} \int_\mathbb{R} \hat{f}(\xi) \hat{g}(\eta)m(\xi,\eta)e^{2\pi i (\xi+\eta) x}d\xi d\eta \ee
and ask himself about its boundedness on certain function spaces.
In such a way the bilinear versions of  several classical operators appearing
in Harmonic Analysis, such as the Hilbert transform, the fractional integrals, the Hardy-Littlewood maximal function and many others   have been considered in
the last decades and their boundedness on several spaces have been addressed.

The study of bilinear multipliers for smooth symbols (where
$m(\xi,\eta)$  is a ``nice" regular function with at most a single point singularity) goes back to the
work by R.R. Coifman and Y. Meyer in \cite{CM}.  A particularly interesting case is
$m(\xi,\eta)=M(\xi-\eta)$ for a measurable function $M$ where, for instance the case
$M(x)=\frac{1}{|x|^{1-\alpha}}$, corresponding to the bilinear fractional transform,  was shown to define a bilinear
multiplier mapping $L^{p_1}(\R)\times L^{p_2}(\R)$ into $L^{p_3}(\R)$ for $1/p_3=1/p_1+1/p_2-\alpha$
for $1<p_1,p_2<\infty$ and $0<\alpha<1/p_1+1/p_2$ (see \cite{KS,GK}) or the celebrated result of the bilinear Hilbert transform, given by the case $M(x)=sign (x)$, was shown to define a
bilinear multiplier of type $(p_1,p_2, p_3)$ for
$1/p_3=1/p_1+1/p_2$ for $1<p_1,p_2<\infty$ and $p_3>2/3$ (\cite{LT1, LT2,LT3}). The case of more general non-smooth symbols was later analyzed by J. Gilbert and A. Namod (see \cite{GN1,GN2}).

Bilinear multipliers acting on  other groups such as torus $\mathbb{T}$ or integers $\mathbb{Z}$ have also been studied. Their corresponding analogues have been achieved using transference properties first by D. Fan and S. Sato \cite{FS} and later by the results in several papers by E. Berkson, O.Blasco, M.J. Carro and A.Gillespie (see \cite{B, BCG,BBCG, BBCG2}). More recently several results on bilinear multipliers defined on  locally compact abelian groups and acting on rearrangement invariant quasi-Banach spaces have been obtained by S. Rodriguez-L\'opez \cite{R}.
Other function spaces such as Lorentz spaces  have been studied mainly by O. Blasco and F. Villarroya (see \cite{BV, V}) and for also for weighted Lebesgue spaces or Lebesgue spaces with variable exponent  by T. G\"urkanli and O. Kulak \cite{GK}. Our objective will be to deal with bilinear multipliers on $\mathbb R$ (although similar results can be presented in $\R^n$) acting on Orlicz spaces.

 Throughout the paper ${\mathcal P}(\R)$ stands for the
set of functions  such that $supp \hat f$
is compact and $S(\R)$ for the Schwartz class on $\R$, i.e. $f:\R \to \C$ such that $f\in C^\infty(\R)$ and $x^kf^{(n)}(x)$ is bounded for any $k$ and $n$.
 We write the Fourier transform  by $\hat f(\xi)= \int_{\R} f(x) e^{-2\pi
i x\xi}dx$  and  we denote the translation by $\tau_yf(x) = f(x-y)$, the modulation by $M_xf(y) = f(y)e^{2\pi\textit{i}xy}$ and the dilation by $D_\lambda f(x)=f(\lambda x)$ for $x,y \in \R$ and $\lambda>0$.  As usual for $g$ defined in $\R^n$ we write $g_t(x)=\frac{1}{t^n}g(\frac{x}{t})=\frac{1}{t^n}D_{1/t}g(x)$ for $x\in \R^n$ and $t>0$. Clearly one has for each $f\in L^1(\R)$, $y\in \R$ and $\lambda>0$
$$
\widehat{(\tau_yf)}(\xi) = M_{-y}\hat{f}(\xi), \quad
 \widehat{(M_xf)}(\xi)  = \tau_x \hat{f}(\xi), \quad
 \widehat{(D_\lambda f)}(\xi)  = \hat{f}_\lambda(\xi).
$$

Given a Young function $\Phi$, the Orlicz space $L^\Phi(\R)$ consists of the set of all measurable functions $f:\R \to \C$ such that $\int_\R \Phi(\vert f(x)\vert/\lambda)dx < \infty$ for some $\lambda>0$, which equipped with the so called Luxemburg norm
\[ N_\Phi(f) = \inf \{ \lambda>0 : \int_\R \Phi(\vert f(x)\vert/\lambda)dx \leq 1 \} \]
becomes a Banach space.

It is known that if a Young function $\Phi$ satisfies the $\Delta_2$-condition (i.e. there exists a constant $k>0$ such that $\Phi(2x) \leq k\Phi(x)$ for all $x\geq 0$), then the space of compactly supported functions in $C^\infty(\R)$ is dense in $L^\Phi(\R)$ with respect to the norm $N_\Phi(\cdot)$. Hence, in this case $S(\R)$ and $\mathcal P(\R)$ are also dense in $L^\Phi(\R)$.

Given two Young functions $\Phi_1$ and $\Phi_2$ the space
   $\M_{\Phi_1,\Phi_2}(\R)$ stands
for the space of bounded functions $m$ defined on $\R$ such that
\begin{equation}\label{lm}T_m (f)(x)= \int_\R \hat
f(\xi) m(\xi) e^{2\pi i\xi x} d\xi
\end{equation}
defines a bounded operator from $L^{\Phi_1}(\R)$
to $L^{\Phi_2}(\R)$. We endow the space with the ``norm" of the
operator $T_m$, that is $\|m\|_{\Phi_1,\Phi_2}=\|T_m\|$.
We refer the reader to \cite{BL, SW} for  the case $\Phi_1(x)=|x|^p$ and $\Phi_2(x)=|x|^q$, to be denoted $\M_{p,q}(\R)$.

\begin{definition} Given  three Young functions  $\Phi_i$ for $i=1,2,3$, a locally integrable function   $m$ defined on $\R^2$ is said to be a {\it bilinear
multiplier}  of type $(\Phi_1,\Phi_2; \Phi_3)$  if there exists a constant $C>0$ such that $$ B_m(f,g)(x)=\int_{\R^n} \int_{\R^n}\hat f(\xi)\hat g(\eta)
 m(\xi,\eta)e^{2\pi i \langle \xi+\eta,x\rangle}d\xi d\eta$$ satisfies  $$N_{\Phi_3}(B_m(f,g))\le
CN_{\Phi_1}(f)N_{\Phi_2}(g)$$ for any $f, g\in \mathcal P(\R)$.

We write $\MB_{(\Phi_1,\Phi_2; \Phi_3)}(\R)$ for the space of bilinear
multipliers of type  $(\Phi_1,\Phi_2; \Phi_3)$  and
$\|m\|_{(\Phi_1,\Phi_2; \Phi_3)}= \|B_m\|$.

  We
 denote by $\tilde\M_{(\Phi_1,\Phi_2,\Phi_3)}(\R)$ the space of
  locally integrable  functions $M$ defined on $\R$ such that
 $m(\xi,\eta)=M(\xi-\eta)\in \MB_{(\Phi_1,\Phi_2; \Phi_3)}(\R)$.
\end{definition}

Note that in the case that $\Phi_1$ and $\Phi_2$ satisfy $\Delta_2$-condition then $M\in \MB_{(\Phi_1,\Phi_2; \Phi_3)}(\R)$ means that
 $$ B_M(f,g)(x)=\int_{\R^n} \int_{\R^n}\hat f(\xi)\hat g(\eta)
 M(\xi-\eta)e^{2\pi i \langle \xi+\eta,x\rangle}d\xi d\eta$$
 extends to a bounded bilinear map from
 $L^{\Phi_1}(\R)\times L^{\Phi_2}(\R)$ into $L^{\Phi_3}(\R)$.
 We keep the notation $\|M\|_{(\Phi_1,\Phi_2; \Phi_3)}= \|B_M\|.$
This generalize the case $\Phi_i(x)=x^{p_i}$ considered in \cite{B1} and denoted $\MB_{(p_1,p_2,p_3)}(\R)$ and
$\tilde\M_{(p_1,p_2,p_3)}(\R)$ respectively.

In this paper, we shall  investigate some  properties of the spaces $\MB_{(\Phi_1,\Phi_2, \Phi_3)}(\R)$ and $\tilde\M_{(\Phi_1,\Phi_2, \Phi_3)}(\R)$.
The paper is divided into five sections. The first section is devoted to recall some notions on Orlicz spaces to be used in the sequel. In particular we shall analyze the norm of the dilation operator $D_\lambda$ acting on Orlicz spaces. In Section 3 we shall give elementary examples of bilinear multipliers and procedures to generate them. In Section 4 we mainly  focus on the case $m(\xi,\eta)=M(\xi-\eta)$ and give some sufficient conditions to define a bilinear multiplier on Orlicz spaces. Finally we use the last section to investigate some necessary conditions to get a non-zero bilinear multipliers in the class $\tilde\M_{(\Phi_1,\Phi_2; \Phi_3)}(\R)$, generalizing  the known results for Lebesgue spaces.

\section{Orlicz spaces}

A non-zero function $\Phi :\R \to [0,\infty]$ is called a Young function if $\Phi$ is convex, even and $\Phi(0)=0$. If $\Phi$ is a Young function then $\Phi^{-1}$ is defined for $0\leq y$ by
$$\Phi^{-1}(y)=\inf\{x>0:\Phi(x)>y\}$$ where $\inf \emptyset=\infty$ and it is easy to see \cite{ON} that
\be \label{new}\Phi(\Phi^{-1}(x))\le x\le \Phi^{-1}(\Phi(x)),\quad  x\ge 0.
\ee


Given a Young function $\Phi$, its complementary function  is defined by
\[ \Psi(y)=\sup\{ x|y| - \Phi(x)  : x\geq0 \} \]
for $y\in \mathbb{R}$. It can be seen that $\Psi$ is still a Young function in the sense of above definition. Then $(\Phi,\Psi)$ is called a complementary pair of Young functions and they satisfy
\be\label{comp}   |x| \leq \Phi^{-1}(x)\Psi^{-1}(x)\leq 2|x|, x\in \R, \ee
and the Young inequality
\be\label{yi} \vert xy \vert \leq \Phi(x)+\Psi(y) ,x,y \in \R. \ee

There are several inequalities to be used throughout the paper when dealing with Orlicz spaces: One deals with the generalization of H\"older's inequality (see \cite{ON},\cite[page 64]{RR}): Let $\Phi_i$, $i=1,2,3$ be Young's functions satisfying
\be \label{hi} \Phi_1^{-1}(x) \Phi_2^{-1}(x) \leq \Phi_3^{-1}(x), \quad x\geq 0.\ee
If $f \in L^{\Phi_1}(\R)$ and $g \in L^{\Phi_2}(\R)$ then $f g \in L^{\Phi_3}(\R)$ and
\begin{equation} \label{holder}
 N_{\Phi_3}(fg) \leq 2 N_{\Phi_1}(f) N_{\Phi_2}(g).
\end{equation}

The other one refers to Young's inequality for convolutions (see \cite{ON},\cite[page 64]{RR}): Let  $\Phi_i$, $i=1,2,3$ be Young functions satisfying \be\label{yi}\Phi_1^{-1}(x) \Phi_2^{-1}(x) \leq x\Phi_3^{-1}(x), \quad x\geq 0. \ee If $f \in L^{\Phi_1}(\R)$ and $g \in L^{\Phi_2}(\R)$ then the convolution $f\star g\in L^{\Phi_3}(\R)$ and
\begin{equation} \label{young}
 N_{\Phi_3}(f\star g) \leq 2 N_{\Phi_1}(f) N_{\Phi_2}(g).
\end{equation}
The reader is referred to \cite{RR} for the proofs of these results and for further information about Orlicz spaces.

In this section, we shall give some estimates to the norms of the dilation operator on Orlicz spaces which will be useful in the sequel.

Given $\gamma>0$ one can define
\[ N_{\Phi,\gamma}(f)=\inf\{ k>0 : \int_\R \Phi(\frac{\vert f(x)\vert}{k}) dx \leq \gamma \} .\]
Of course $N_{\Phi,1}=N_{\Phi}$. Let us observe that these quantities give equivalent norms in $L^\Phi(\R)$. In fact, by convexity, we can easily see the following property of these norms: If $ 0<\gamma_1<\gamma_2$ and $f$ is a measurable function then
\be \label{in1} \frac{\gamma_1}{\gamma_2}N_{\Phi,\gamma_1}(f)\le N_{\Phi,\gamma_2}(f) \leq N_{\Phi,\gamma_1}(f). \ee




Throughout the paper $$C_\Phi(\lambda)=\|D_\lambda\|_{L^\Phi(\R)\to L^\Phi(\R)}.$$
Of course $C_\Phi(\lambda)$ is non-increasing, submultiplicative and $C_\Phi(1)=1$.

\begin{proposition} \label{p0} Let $\lambda>0$ and $\Phi$ a Young function. Then
$$ \frac{1}{\max\{1,\lambda\}}\le C_\Phi(\lambda)\le \frac{1}{\min\{1,\lambda\}}.$$
\end{proposition}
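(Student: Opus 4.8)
The plan is to reduce the statement to the elementary norm inequality (\ref{in1}) by a change of variables. First I would note that for every measurable $f$ and every $k>0$, the substitution $y=\lambda x$ gives
\[ \int_\R \Phi\Big(\frac{\vert D_\lambda f(x)\vert}{k}\Big)\,dx = \int_\R \Phi\Big(\frac{\vert f(\lambda x)\vert}{k}\Big)\,dx = \frac{1}{\lambda}\int_\R \Phi\Big(\frac{\vert f(y)\vert}{k}\Big)\,dy, \]
so that $\int_\R \Phi(\vert D_\lambda f\vert/k)\,dx\le 1$ holds precisely when $\int_\R \Phi(\vert f\vert/k)\,dx\le \lambda$. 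Taking the infimum over all such $k$ yields the identity $N_\Phi(D_\lambda f)=N_{\Phi,\lambda}(f)$, valid for all measurable $f$ (both sides possibly infinite), and consequently
\[ C_\Phi(\lambda)=\sup_{f\neq 0}\frac{N_{\Phi,\lambda}(f)}{N_\Phi(f)}=\sup_{f\neq 0}\frac{N_{\Phi,\lambda}(f)}{N_{\Phi,1}(f)}. \]

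Then I would split into the two cases according to whether $\lambda\ge 1$ or $0<\lambda<1$ and apply (\ref{in1}), recalling $N_{\Phi,1}=N_\Phi$. If $\lambda\ge 1$, the choice $\gamma_1=1$, $\gamma_2=\lambda$ in (\ref{in1}) gives $\frac{1}{\lambda}N_\Phi(f)\le N_{\Phi,\lambda}(f)\le N_\Phi(f)$ for every $f$, hence $\frac{1}{\lambda}\le C_\Phi(\lambda)\le 1$; since in this range $\max\{1,\lambda\}=\lambda$ and $\min\{1,\lambda\}=1$, this is exactly the asserted bound. If $0<\lambda<1$, the choice $\gamma_1=\lambda$, $\gamma_2=1$ gives instead $\lambda N_{\Phi,\lambda}(f)\le N_\Phi(f)\le N_{\Phi,\lambda}(f)$, i.e. $1\le C_\Phi(\lambda)\le \frac{1}{\lambda}$, which again matches the statement because now $\max\{1,\lambda\}=1$ and $\min\{1,\lambda\}=\lambda$. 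The case $\lambda=1$ is the trivial $C_\Phi(1)=1$.

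I do not anticipate any real obstacle. For the lower bound one only needs that $N_{\Phi,\lambda}(f)/N_\Phi(f)$ is bounded below by the stated constant for every nonzero $f$, so the supremum inherits that bound without constructing any extremizing sequence; for the upper bound one uses the corresponding uniform upper estimate from (\ref{in1}). The only points requiring mild care are the legitimacy of the change of variables for general measurable $f$ and the bookkeeping of $\max$ versus $\min$ in the two regimes; convexity of $\Phi$ enters only through (\ref{in1}), which is already available.
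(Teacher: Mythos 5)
Your proof is correct and follows essentially the same route as the paper: establishing the identity $N_\Phi(D_\lambda f)=N_{\Phi,\lambda}(f)$ via the change of variables and then applying the comparison inequality (\ref{in1}) in the two regimes $\lambda\ge 1$ and $0<\lambda<1$. The extra care you take with the supremum and the change of variables is fine but adds nothing beyond the paper's argument.
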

\begin{proof}
It is straightforward that for $f\in L^\Phi(\R)$ and $\lambda>0$ one has
\be \label{dil0}
 N_\Phi(D_\lambda f)= N_{\phi, \lambda} (f).
\ee

Using now  (\ref{in1})  we have
$$ N_{\Phi,\lambda}(f)\le N_\Phi(f)\le \lambda N_{\Phi,\lambda}(f), \quad \lambda\ge 1$$
and
$$ \lambda N_{\Phi,\lambda}(f)\le N_\Phi(f)\le  N_{\Phi,\lambda}(f), \quad 0<\lambda\le 1$$
The result now follows from (\ref{dil0}).
\end{proof}

Let us now get better estimates for $C_\Phi(\lambda)$ using the following lemma.

\begin{lemma} \label{l1} Let $\Phi$ be a Young function and $A\subset \R$ be measurable with $0<|A|<\infty$.
If $f$ be a bounded function supported on $A$ then
$$ \frac{\|f\|_1}{|A|\Phi^{-1}(|A|^{-1})}\le N_\Phi(f)\le\frac{\|f\|_\infty}{\Phi^{-1}(|A|^{-1})}$$
where $|A|$ stands for the Lebesgue measure of $A$.

In particular if $|f(x)|=1$ for $x\in A$ then $N_\Phi(f)=\frac{1}{\Phi^{-1}(|A|^{-1})}$.
\end{lemma}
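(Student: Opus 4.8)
The plan is to prove the two displayed inequalities separately; both rest only on the elementary bounds (\ref{new}) relating $\Phi$ and its generalized inverse, together with the facts that a Young function is non-decreasing on $[0,\infty)$ (immediate from convexity and $\Phi(0)=0$) and that $\Phi^{-1}$ is non-decreasing (as an infimum over sets that shrink when the level rises). No appeal to the $\Delta_2$-condition or to density of smooth functions is needed.

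For the upper estimate I would simply test the definition of the Luxemburg norm with the explicit candidate $\lambda_0=\|f\|_\infty/\Phi^{-1}(|A|^{-1})$. Since $f$ is supported on $A$ and $|f(x)|/\lambda_0\le\Phi^{-1}(|A|^{-1})$ there, monotonicity of $\Phi$ and the left-hand inequality in (\ref{new}) give $\Phi(|f(x)|/\lambda_0)\le\Phi(\Phi^{-1}(|A|^{-1}))\le|A|^{-1}$ for a.e.\ $x\in A$; integrating over $A$ yields $\int_\R\Phi(|f|/\lambda_0)\,dx\le|A|\cdot|A|^{-1}=1$, hence $N_\Phi(f)\le\lambda_0$, which is exactly the claimed upper bound.

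For the lower estimate, fix any admissible $\lambda>0$, i.e.\ one with $\int_\R\Phi(|f(x)|/\lambda)\,dx\le1$. Applying Jensen's inequality to the convex function $\Phi$ with respect to the probability measure $dx/|A|$ on $A$, and using that $f$ is supported on $A$ so that $\frac1{|A|}\int_A|f|/\lambda\,dx=\|f\|_1/(|A|\lambda)$, one obtains $\Phi\bigl(\|f\|_1/(|A|\lambda)\bigr)\le\frac1{|A|}\int_A\Phi(|f|/\lambda)\,dx\le|A|^{-1}$. Now the right-hand inequality in (\ref{new}), applied at the point $x=\|f\|_1/(|A|\lambda)$, together with monotonicity of $\Phi^{-1}$, gives $\|f\|_1/(|A|\lambda)\le\Phi^{-1}\bigl(\Phi(\|f\|_1/(|A|\lambda))\bigr)\le\Phi^{-1}(|A|^{-1})$, that is $\lambda\ge\|f\|_1/\bigl(|A|\Phi^{-1}(|A|^{-1})\bigr)$. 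Taking the infimum over all admissible $\lambda$ yields $N_\Phi(f)\ge\|f\|_1/\bigl(|A|\Phi^{-1}(|A|^{-1})\bigr)$. Finally, the "in particular" assertion drops out at once: if $|f(x)|=1$ on $A$ then $\|f\|_\infty=1$ and $\|f\|_1=|A|$, so the two bounds coincide and pin down $N_\Phi(f)=1/\Phi^{-1}(|A|^{-1})$.

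The only delicate point is bookkeeping the correct direction of the inequalities involving the generalized inverse $\Phi^{-1}$ — this is precisely where the two-sided estimate (\ref{new}) and the monotonicity of both $\Phi$ and $\Phi^{-1}$ are used — but beyond this there is no genuine obstacle.
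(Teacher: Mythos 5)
Your proof is correct and follows essentially the same route as the paper: the upper bound via the pointwise estimate $|f|\le\|f\|_\infty\chi_A$ together with the relation (\ref{new}) between $\Phi$ and $\Phi^{-1}$, and the lower bound via Jensen's inequality on $A$. The only difference is cosmetic — you test the explicit candidate $\lambda_0=\|f\|_\infty/\Phi^{-1}(|A|^{-1})$ and bound admissible $\lambda$ from below, whereas the paper rewrites the infimum defining $N_\Phi(f)$ directly — so the argument is the same in substance.
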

\begin{proof} From (\ref{new}) one sees that $\{x>0: \Phi(x) \leq a \}=\{x>0: x\leq \Phi^{-1}(a)\}$ for $a>0$. Therefore since $ |f(x)|\le \|f\|_\infty\chi_A(x)$ we have
\ba
N_\Phi(f) &=&  \inf \{ k>0 : \int_A \Phi(\frac{\vert f(x)\vert )}{k})dx \leq 1 \}\\
&\le&  \inf \{ k>0 : \int_A \Phi(\frac{\|f\|_\infty }{k})dx \leq 1 \}\\
&= & \inf \{ k>0 : \Phi(\frac{\|f\|_\infty }{k}) \leq |A|^{-1} \}\\
&=&  \inf \{ k>0 : \frac{\|f\|_\infty}{ \Phi^{-1}(|A|^{-1}) } \leq k\}\\
&=& \frac{\|f\|_\infty}{\Phi^{-1}(|A|^{-1})}.
\ea

For the other inequality we use Jensen inequality for convex functions.  Indeed
\ba
N_\Phi(f)&=&  \inf \{ k>0 : \frac{1}{|A|}\int_A \Phi(\frac{\vert f(x)\vert }{k})dx \leq \frac{1}{|A|} \}\\
&\ge&  \inf \{ k>0 : \Phi(\frac{1}{|A|}\int_A\frac{\vert f(x)\vert }{k})dx) \leq |A|^{-1} \}\\
&=&  \inf \{ k>0 : \frac{\|f\|_1}{|A|k}\leq \Phi^{-1}(|A|^{-1}) \}\\
&=& \frac{\|f\|_1}{|A|\Phi^{-1}(|A|^{-1})}.
\ea
\end{proof}

\begin{proposition}  \label{p1}Let $\Phi$ be a Young function. Then  $C_\Phi(\lambda)\ge \sup_{\mu>0}\frac{\Phi^{-1}(\mu)}{\Phi^{-1}(\lambda\mu)}.$
\end{proposition}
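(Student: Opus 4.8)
The plan is to test the dilation operator $D_\lambda$ against a well-chosen family of functions whose Orlicz norm is explicitly computable, and to use the fact that dilation interacts with the Fourier transform (hence with indicator-type functions on the Fourier side) in a controlled way. Concretely, fix $\mu>0$ and let $A\subset\R$ be a measurable set with $|A|=\mu^{-1}$; take $f=\chi_A$. By the last assertion of Lemma~\ref{l1} we have $N_\Phi(\chi_A)=\dfrac{1}{\Phi^{-1}(|A|^{-1})}=\dfrac{1}{\Phi^{-1}(\mu)}$. Now $D_\lambda f = \chi_{A_\lambda}$ where $A_\lambda=\{x:\lambda x\in A\}=\lambda^{-1}A$, so $|A_\lambda|=\lambda^{-1}\mu^{-1}=(\lambda\mu)^{-1}$, and again by Lemma~\ref{l1}, $N_\Phi(D_\lambda f)=\dfrac{1}{\Phi^{-1}((\lambda\mu)^{-1})^{-1}}$; more precisely $N_\Phi(D_\lambda\chi_A)=\dfrac{1}{\Phi^{-1}(|A_\lambda|^{-1})}=\dfrac{1}{\Phi^{-1}(\lambda\mu)}$. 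Wait --- one must be careful: $|A_\lambda|^{-1}=\lambda\mu$, so indeed $N_\Phi(D_\lambda\chi_A)=\dfrac{1}{\Phi^{-1}(\lambda\mu)}$.

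Therefore, for every $\mu>0$,
\[
C_\Phi(\lambda)=\|D_\lambda\|_{L^\Phi\to L^\Phi}\ \ge\ \frac{N_\Phi(D_\lambda\chi_A)}{N_\Phi(\chi_A)}=\frac{1/\Phi^{-1}(\lambda\mu)}{1/\Phi^{-1}(\mu)}=\frac{\Phi^{-1}(\mu)}{\Phi^{-1}(\lambda\mu)}.
\]
Taking the supremum over $\mu>0$ gives $C_\Phi(\lambda)\ge\sup_{\mu>0}\dfrac{\Phi^{-1}(\mu)}{\Phi^{-1}(\lambda\mu)}$, which is exactly the claim.

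The only genuine subtlety --- and the step I would be most careful about --- is the existence of a measurable set of prescribed finite Lebesgue measure $\mu^{-1}$ on $\R$, which of course is trivial (take an interval), and the correct bookkeeping of which way the reciprocals go under dilation: $D_\lambda$ shrinks the support by a factor $\lambda$ when $\lambda>1$, so $|A_\lambda|^{-1}=\lambda|A|^{-1}$. One should also double-check that Lemma~\ref{l1} applies, i.e. that $\chi_A$ is bounded, supported on a set of finite positive measure, and has modulus $1$ on that set --- all clear. A minor alternative, if one prefers not to invoke the "in particular" clause of Lemma~\ref{l1} directly, is to use the two-sided bound there with $\|f\|_1=\|f\|_\infty=|A|$, which pins $N_\Phi(\chi_A)$ to $1/\Phi^{-1}(|A|^{-1})$ anyway. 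No deeper obstacle is expected; the proposition is essentially a direct consequence of the explicit norm computation for indicators.
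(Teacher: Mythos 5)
Your proof is correct and follows essentially the same route as the paper: test $D_\lambda$ on characteristic functions (the paper uses $\chi_{[0,a]}$, you use $\chi_A$ with $|A|=\mu^{-1}$, which is immaterial), compute both norms exactly via the last assertion of Lemma~\ref{l1}, and take the supremum over the parameter. The bookkeeping $|A_\lambda|^{-1}=\lambda|A|^{-1}$ is handled correctly, so nothing is missing.
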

\begin{proof} Taking $A=[0,a]$ and $f=\chi_A$ in Lemma \ref{l1}, since $D_\lambda (\chi_{[0,a]})= \chi_{[0, \frac{a}{\lambda}]}$ one obtains
$$ N_\Phi( D_\lambda \chi_{[0,a]})= N_\Phi(\chi_{[0, \frac{a}{\lambda}]})=\frac{1}{\Phi^{-1}(\frac{\lambda}{a})}, \quad N_\Phi( f)=\frac{1}{\Phi^{-1}(\frac{1}{a})}.$$
Hence
$$C_\Phi(\lambda)\ge \sup_{a>0} \frac{N_\Phi( D_\lambda \chi_{[0,a]})}{N_\Phi( \chi_{[0,a]})}= \sup_{\mu>0}\frac{\Phi^{-1}(\mu)}{\Phi^{-1}(\lambda\mu)}.$$
\end{proof}

\begin{theorem} \label{submul}
Let $\Phi$ be a Young function.

(i) If $\Phi(st) \geq \Phi_1(s)\Phi(t)$ for all $s,t \ge 0$ for some $\Phi_1:\R^+\to \R^+$ non-decreasing and left continuous then $ C_\Phi(\lambda) \leq \Phi_1^{-1}(\frac{1}{\lambda}).$

(ii)If $\Phi(st) \leq \Phi_2(s)\Phi(t)$ for all $s,t \ge 0$ for some $\Phi_2:\R^+\to \R^+$ non-decreasing and left continuous then
$ C_\Phi(\lambda) \leq \frac{1}{\Phi_2^{-1}(\lambda)}. $

\end{theorem}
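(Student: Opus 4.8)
The plan is to estimate $N_\Phi(D_\lambda f)$ directly from the definition of the Luxemburg norm, exploiting the submultiplicativity-type hypothesis on $\Phi$ together with the change of variables $y = \lambda x$. Recall from \eqref{dil0} that $N_\Phi(D_\lambda f) = N_{\Phi,\lambda}(f)$, so it suffices to control $N_{\Phi,\lambda}(f)$ in terms of $N_\Phi(f)$; however, it is cleaner to work with $N_\Phi(D_\lambda f)$ itself. Writing out $\int_\R \Phi(|D_\lambda f(x)|/k)\,dx = \int_\R \Phi(|f(\lambda x)|/k)\,dx = \frac{1}{\lambda}\int_\R \Phi(|f(y)|/k)\,dy$, I would then try to relate $\Phi(|f(y)|/k)$ to $\Phi(|f(y)|/k')$ for a suitably rescaled $k'$ using the hypothesis.

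For part (i): if $k = c\, N_\Phi(f)$ for a constant $c$ to be chosen, write $|f(y)|/k = s \cdot (|f(y)|/N_\Phi(f))$ with $s = 1/c$, and apply $\Phi(st) \ge \Phi_1(s)\Phi(t)$ with $t = |f(y)|/N_\Phi(f)$ to get $\Phi(|f(y)|/k) \ge \Phi_1(1/c)\,\Phi(|f(y)|/N_\Phi(f))$; wait — this goes the wrong direction, so instead I would use the bound in the form that makes $\Phi$ of the dilated argument \emph{small}. The correct move is: we want $\frac{1}{\lambda}\int_\R \Phi(|f(y)|/k)\,dy \le 1$. Choose $k$ so that $|f(y)|/k = s\,(|f(y)|/N_\Phi(f))$, i.e. $s = N_\Phi(f)/k$. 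Then by hypothesis (i), $\Phi\big(s \cdot \tfrac{|f(y)|}{N_\Phi(f)}\big) \ge \Phi_1(s)\,\Phi\big(\tfrac{|f(y)|}{N_\Phi(f)}\big)$ is again the wrong inequality; so in fact for part (i) one must instead choose $s$ with $\tfrac{|f(y)|}{N_\Phi(f)} = s\cdot\tfrac{|f(y)|}{k}$, i.e. $s = k/N_\Phi(f)$, giving $\Phi(|f(y)|/N_\Phi(f)) \ge \Phi_1(k/N_\Phi(f))\,\Phi(|f(y)|/k)$, hence $\int \Phi(|f(y)|/k)\,dy \le \Phi_1(k/N_\Phi(f))^{-1}\int \Phi(|f(y)|/N_\Phi(f))\,dy \le \Phi_1(k/N_\Phi(f))^{-1}$. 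So the requirement $\frac{1}{\lambda}\Phi_1(k/N_\Phi(f))^{-1} \le 1$ becomes $\Phi_1(k/N_\Phi(f)) \ge 1/\lambda$, which by left continuity and monotonicity of $\Phi_1$ holds as soon as $k/N_\Phi(f) \ge \Phi_1^{-1}(1/\lambda)$. Taking $k = \Phi_1^{-1}(1/\lambda)\,N_\Phi(f)$ (or any value slightly larger and passing to the infimum) yields $N_\Phi(D_\lambda f) \le \Phi_1^{-1}(1/\lambda)\,N_\Phi(f)$, which is the claim.

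For part (ii): the hypothesis $\Phi(st) \le \Phi_2(s)\Phi(t)$ now points the right way. With $k = c\,N_\Phi(f)$ and $t = |f(y)|/N_\Phi(f)$, $s = 1/c$, we get $\Phi(|f(y)|/k) = \Phi(s t) \le \Phi_2(1/c)\,\Phi(|f(y)|/N_\Phi(f))$, so $\frac{1}{\lambda}\int_\R \Phi(|f(y)|/k)\,dy \le \frac{1}{\lambda}\Phi_2(1/c) \cdot 1$. This is $\le 1$ provided $\Phi_2(1/c) \le \lambda$, i.e. $1/c \le \Phi_2^{-1}(\lambda)$, i.e. $c \ge 1/\Phi_2^{-1}(\lambda)$. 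Hence $N_\Phi(D_\lambda f) \le k = \tfrac{1}{\Phi_2^{-1}(\lambda)}\,N_\Phi(f)$, giving $C_\Phi(\lambda) \le 1/\Phi_2^{-1}(\lambda)$.

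The only genuinely delicate points are the bookkeeping at the boundary of the infimum (one should argue with $k$ strictly larger than the target and let $k$ decrease, or invoke that $N_\Phi(f)$ itself achieves $\int\Phi(|f|/N_\Phi(f))\,dx \le 1$ when the integral is finite), and the use of left continuity of $\Phi_i$ to convert the inequality $\Phi_1(k/N_\Phi(f)) \ge 1/\lambda$ into $k/N_\Phi(f) \ge \Phi_1^{-1}(1/\lambda)$ via the characterization of $\Phi_i^{-1}$ in \eqref{new}. I expect the main obstacle is simply keeping the direction of each inequality straight, since the two parts use the submultiplicative estimate in opposite senses and the substitution $s \leftrightarrow 1/c$ must be matched correctly to the direction of $\Phi_i$; once the substitution is set up as above, both cases are short.
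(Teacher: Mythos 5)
Your argument is correct and follows essentially the same route as the paper: apply the hypothesis pointwise to compare $\Phi$ at the dilated scale with $\Phi$ at the scale $N_\Phi(f)$, integrate, change variables to pick up the factor $1/\lambda$, and choose the constant $\Phi_1^{-1}(1/\lambda)$ resp.\ $1/\Phi_2^{-1}(\lambda)$ via \eqref{new}. The only cosmetic difference is in part (i): where the paper lets $s_n\downarrow\Phi_1^{-1}(1/\lambda)$ and invokes Lebesgue convergence to land exactly at $k_\lambda=\Phi_1^{-1}(1/\lambda)N_\Phi(f)$, you take $k$ slightly larger and pass to the infimum defining the Luxemburg norm, which is an equally valid way to handle the boundary case.
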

\begin{proof} (i) Assume that $\Phi(st) \geq \Phi_1(s)\Phi(t)$ for $s,t\ge 0$.
Note that for any $s>0$ and $k>0$ we have
$$\Phi_1(s)\int_\R\Phi(\frac{|D_\lambda f(x)|}{k}) dx=\frac{\Phi_1(s)}{\lambda}\int_\R\Phi(\frac{|f(x)|}{k}) dx\le \frac{1}{\lambda} \int_\R\Phi(s\frac{|f(x)|}{k}) dx.$$
In particular whenever  $\Phi_1(s)>\frac{1}{\lambda}$ one obtains that
$$\int_\R\Phi(\frac{|D_\lambda f(x)|}{k}) dx\le \int_\R\Phi(s\frac{|f(x)|}{k}) dx.$$
Select a decreasing sequence $s_n$ converging to $\Phi_1^{-1}(\frac{1}{\lambda})$ and invoke the Lebesgue convergence theorem to get
$$\int_\R\Phi(\frac{|D_\lambda f(x)|}{k}) dx\le  \int_\R\Phi(\Phi_1^{-1}(\frac{1}{\lambda})\frac{|f(x)|}{k}) dx.$$
Therefore for $k_\lambda=\Phi_1^{-1}(\frac{1}{\lambda})N_\Phi(f) $ one gets $\int_\R\Phi(\frac{|D_\lambda f(x)|}{k_\lambda}) dx \le 1.$
This gives that $N_\Phi(D_\lambda f)\le \Phi_1^{-1}(\frac{1}{\lambda}) N_\Phi(f)$ and we obtain (i).

(ii) Assume now $\Phi(st) \leq \Phi_2(s)\Phi(t)$. As above for $s>0$
$$\int_\R\Phi(s\frac{|D_\lambda f(x)|}{k}) dx=\frac{1}{\lambda}\int_\R\Phi(s\frac{|f(x)|}{k}) dx\le \frac{\Phi_2(s)}{\lambda} \int_\R\Phi(\frac{|f(x)|}{k}) dx.$$

Choosing $s=\Phi_2^{-1}(\lambda)$  one obtains from (\ref{new}) that $\Phi_2(s)\le \lambda$. Hence
$$\int_\R\Phi(\frac{\Phi_2^{-1}(\lambda)|D_\lambda f(x)|}{k}) dx\le   \int_\R\Phi(\frac{|f(x)|}{k}) dx.$$
Now selecting  $k=N_\Phi(f)$ we get $N_\Phi(D_\lambda f)\le \frac{1}{\Phi_2^{-1}(\lambda)} N_\Phi(f)$.  This finishes the proof of (ii).
\end{proof}

Invoking Theorem \ref{submul} and Proposition \ref{p1}  we obtain the following result.
\begin{corollary} Let $\Phi$ be a Young function satisfying $\Phi(st) \leq \Phi(s)\Phi(t)$ for all $s,t \ge 0$.  Then
\[ \frac{\Phi^{-1}(1)}{\Phi^{-1}(\lambda)}\le C_\Phi(\lambda) \leq \frac{1}{\Phi^{-1}(\lambda)}. \]
\end{corollary}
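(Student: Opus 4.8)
The plan is to derive this corollary directly from the two results it cites, specializing the general submultiplicativity hypotheses to the single condition $\Phi(st)\le\Phi(s)\Phi(t)$.

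First I would obtain the upper bound. The hypothesis $\Phi(st)\le\Phi(s)\Phi(t)$ for all $s,t\ge 0$ is exactly the hypothesis of Theorem~\ref{submul}(ii) with $\Phi_2=\Phi$, once one checks that $\Phi$ itself can serve as the auxiliary function there: $\Phi$ is non-decreasing on $\R^+$ (being convex, even, with $\Phi(0)=0$) and left continuous on the interior of $\{\Phi<\infty\}$, which is all that the proof of Theorem~\ref{submul}(ii) uses. Applying Theorem~\ref{submul}(ii) verbatim yields
\[ C_\Phi(\lambda)\le \frac{1}{\Phi^{-1}(\lambda)}. \]

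Next I would obtain the lower bound. Here I use Proposition~\ref{p1}, which gives $C_\Phi(\lambda)\ge \sup_{\mu>0}\frac{\Phi^{-1}(\mu)}{\Phi^{-1}(\lambda\mu)}$ for \emph{every} Young function, with no extra hypothesis. I would simply pick $\mu=1$ in the supremum to get
\[ C_\Phi(\lambda)\ge \frac{\Phi^{-1}(1)}{\Phi^{-1}(\lambda)}. \]
Combining the two displayed inequalities gives the claimed chain, completing the proof. The only point requiring a moment's care — and the closest thing to an obstacle — is confirming that $\Phi$ meets the regularity demands (monotonicity and left continuity) imposed on the auxiliary function $\Phi_2$ in Theorem~\ref{submul}(ii); this is immediate from the definition of a Young function, so I would state it in one sentence and move on. Everything else is a direct substitution into results already proved in this section.
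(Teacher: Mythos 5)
Your proposal is correct and follows exactly the paper's route: the paper deduces the corollary by "invoking Theorem \ref{submul} and Proposition \ref{p1}", i.e.\ the upper bound from Theorem \ref{submul}(ii) with $\Phi_2=\Phi$ and the lower bound from Proposition \ref{p1} (taking $\mu=1$ in the supremum), which is precisely what you do. Your remark that $\Phi$ itself satisfies the monotonicity and left-continuity requirements on the auxiliary function is the only verification needed, and it is consistent with the paper's conventions on Young functions.
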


\begin{remark} 

If $\Phi$ is sub-multiplicative and $\Phi(1)=1$ then $C_\Phi(\lambda)=\frac{1}{\Phi^{-1}(\lambda)}$. This is the case for $\Phi(x)= |x|^p$ where we obtain $C_\Phi(\lambda)=\lambda^{-1/p}$.
\end{remark}

\section{Bilinear multipliers: The basics}

Let us start with some elementary properties  of the bilinear
multipliers acting on Orlicz spaces. We follow the arguments in \cite{B1} where the case of Lebesgue spaces was studied.
Since the norm in Orlicz spaces is invariant under translations and modulations one can easily obtain the following results.

\begin{proposition}  \label{l} Let $\Phi_i$ for $i=1,2,3$ and $\tilde\Phi_j$ for $j=1,2$ be Young functions and let $m\in \MB_{(\Phi_1,\Phi_2,\Phi_3)}(\R)$.

\begin{enumerate}[\sf \ \ \ (a)]
\item If $m_1\in \M_{\tilde\Phi_1,\Phi_1}(\R)$, $m_2\in \M_{\tilde\Phi_2,\Phi_2}(\R)$
and  $\tilde m(\xi,\eta)=m_1(\xi)m(\xi,\eta)m_2(\eta)$ then $\tilde m\in \MB_{(\tilde\Phi_1,\tilde\Phi_2,\Phi_3)}(\R)$.
Moreover $$ \|\tilde m\|_{\tilde\Phi_1,\tilde\Phi_2,\Phi_3}\le \|m_1 \|_{\tilde\Phi_1,\Phi_1}\| m
\|_{(\Phi_1,\Phi_2,\Phi_3)}\| m_2\|_{\tilde\Phi_2,\Phi_2}$$

\item  If $\tau_{(\xi_0,\eta_0)} m(\xi,\eta)= m(\xi-\xi_0,\eta-\eta_0)$ then $\tau_{(\xi_0,\eta_0)} m\in \MBR$ for each $(\xi_0,\eta_0)\in
\R^2$ and $$ \|\tau_{(\xi_0,\eta_0)} m\|_{(\Phi_1,\Phi_2,\Phi_3)}= \|
m\|_{(\Phi_1,\Phi_2,\Phi_3)}.$$

\item If $M_{(\xi_0,\eta_0)} m(\xi,\eta)= e^{2\pi i(\xi_0 \xi+\eta_0\eta)}$ then $M_{(\xi_0,\eta_0)} m\in \MBR$ for each $(\xi_0,\eta_0)\in
\R^2$ and $$ \|M_{(\xi_0,\eta_0)} m\|_{(\Phi_1,\Phi_2,\Phi_3)}= \|
m\|_{(\Phi_1,\Phi_2,\Phi_3)}$$

\end{enumerate}
\label{1}
\end{proposition}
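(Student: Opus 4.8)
The plan is to verify each of the three statements directly from the definition of $B_m$, exploiting the fact that the Luxemburg norm on Orlicz spaces is invariant under translations and modulations, together with the substitution rules for the Fourier transform recorded in the introduction.

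For part (a), I would start from $f,g\in\mathcal P(\R)$ and compute $B_{\tilde m}(f,g)$ with $\tilde m(\xi,\eta)=m_1(\xi)m(\xi,\eta)m_2(\eta)$. Writing $u=T_{m_1}f$ and $v=T_{m_2}g$, so that $\hat u(\xi)=m_1(\xi)\hat f(\xi)$ and $\hat v(\eta)=m_2(\eta)\hat g(\eta)$ (at least formally; since $m_1$ need only be bounded and $f\in\mathcal P(\R)$, $\hat u$ is still compactly supported so $u\in\mathcal P(\R)$, and similarly for $v$), one gets
\[
B_{\tilde m}(f,g)(x)=\int_\R\int_\R \hat u(\xi)\hat v(\eta) m(\xi,\eta) e^{2\pi i(\xi+\eta)x}\,d\xi\,d\eta = B_m(u,v)(x).
\]
Then $N_{\Phi_3}(B_{\tilde m}(f,g)) = N_{\Phi_3}(B_m(u,v)) \le \|m\|_{(\Phi_1,\Phi_2,\Phi_3)} N_{\Phi_1}(u) N_{\Phi_2}(v) \le \|m\|_{(\Phi_1,\Phi_2,\Phi_3)}\|m_1\|_{\tilde\Phi_1,\Phi_1}\|m_2\|_{\tilde\Phi_2,\Phi_2} N_{\tilde\Phi_1}(f) N_{\tilde\Phi_2}(g)$, which is the claimed inequality. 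A small point to check is that $\mathcal P(\R)$ is a reasonable class on which the a priori estimate for $m$ was assumed, and that $u,v\in\mathcal P(\R)$; this holds because multiplying a compactly supported $\hat f$ by a bounded $m_1$ keeps the support compact.

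For part (b), with $\tau_{(\xi_0,\eta_0)}m(\xi,\eta)=m(\xi-\xi_0,\eta-\eta_0)$, I would change variables $\xi\mapsto\xi+\xi_0$, $\eta\mapsto\eta+\eta_0$ inside the integral defining $B_{\tau_{(\xi_0,\eta_0)}m}(f,g)$, producing
\[
B_{\tau_{(\xi_0,\eta_0)}m}(f,g)(x) = e^{2\pi i(\xi_0+\eta_0)x}\int_\R\int_\R \hat f(\xi+\xi_0)\hat g(\eta+\eta_0) m(\xi,\eta) e^{2\pi i(\xi+\eta)x}\,d\xi\,d\eta.
\]
Recognizing $\hat f(\xi+\xi_0)=\widehat{M_{-\xi_0}f}(\xi)$ and $\hat g(\eta+\eta_0)=\widehat{M_{-\eta_0}g}(\eta)$ via $\widehat{M_xf}=\tau_x\hat f$, the integral equals $B_m(M_{-\xi_0}f, M_{-\eta_0}g)(x)$, so $B_{\tau_{(\xi_0,\eta_0)}m}(f,g) = M_{\xi_0+\eta_0}\bigl(B_m(M_{-\xi_0}f, M_{-\eta_0}g)\bigr)$. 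Taking $N_{\Phi_3}$ and using modulation-invariance of the norm on both sides (on the outside, and on each factor $M_{-\xi_0}f$, $M_{-\eta_0}g$) yields $N_{\Phi_3}(B_{\tau_{(\xi_0,\eta_0)}m}(f,g)) = N_{\Phi_3}(B_m(M_{-\xi_0}f, M_{-\eta_0}g)) \le \|m\|_{(\Phi_1,\Phi_2,\Phi_3)}N_{\Phi_1}(f)N_{\Phi_2}(g)$, and the reverse inequality follows by symmetry (replacing $(\xi_0,\eta_0)$ by $(-\xi_0,-\eta_0)$), giving equality of norms.

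For part (c), as stated the symbol should read $M_{(\xi_0,\eta_0)}m(\xi,\eta)=e^{2\pi i(\xi_0\xi+\eta_0\eta)}m(\xi,\eta)$; then the extra factor $e^{2\pi i(\xi_0\xi+\eta_0\eta)}$ can be absorbed by writing $e^{2\pi i(\xi_0+x)\xi}e^{2\pi i(\eta_0+x)\eta}$... more cleanly, note $B_{M_{(\xi_0,\eta_0)}m}(f,g)(x) = \int\int \hat f(\xi)\hat g(\eta) m(\xi,\eta) e^{2\pi i\xi(x+\xi_0)}e^{2\pi i\eta(x+\eta_0)}\,d\xi\,d\eta$, which does not collapse to a single translation unless $\xi_0=\eta_0$. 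The intended argument is surely to recognize $e^{2\pi i\xi_0\xi}\hat f(\xi) = \widehat{\tau_{\xi_0}f}(\xi)$ and $e^{2\pi i\eta_0\eta}\hat g(\eta)=\widehat{\tau_{\eta_0}g}(\eta)$, giving $B_{M_{(\xi_0,\eta_0)}m}(f,g) = B_m(\tau_{\xi_0}f, \tau_{\eta_0}g)$, hence by translation-invariance of the norms $N_{\Phi_3}(B_{M_{(\xi_0,\eta_0)}m}(f,g)) = N_{\Phi_3}(B_m(\tau_{\xi_0}f,\tau_{\eta_0}g)) \le \|m\|_{(\Phi_1,\Phi_2,\Phi_3)} N_{\Phi_1}(\tau_{\xi_0}f)N_{\Phi_2}(\tau_{\eta_0}g) = \|m\|_{(\Phi_1,\Phi_2,\Phi_3)}N_{\Phi_1}(f)N_{\Phi_2}(g)$, with the reverse inequality again by symmetry. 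The only real subtlety in the whole proposition — hardly an obstacle — is bookkeeping: making sure each change of variables in the oscillatory double integral is matched correctly against one of the three Fourier identities $\widehat{\tau_yf}=M_{-y}\hat f$, $\widehat{M_xf}=\tau_x\hat f$, $\widehat{D_\lambda f}=\hat f_\lambda$, and that all the auxiliary functions remain in $\mathcal P(\R)$ so the a priori bilinear estimate for $m$ applies.
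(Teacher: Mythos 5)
Your proof is correct and follows essentially the same route as the paper: establish the identities $B_{m_1 m m_2}(f,g)=B_m(T_{m_1}f,T_{m_2}g)$, $B_{\tau_{(\xi_0,\eta_0)}m}(f,g)=M_{\xi_0+\eta_0}B_m(M_{-\xi_0}f,M_{-\eta_0}g)$ and $B_{M_{(\xi_0,\eta_0)}m}(f,g)=B_m(\tau_{-\xi_0}f,\tau_{-\eta_0}g)$, and then invoke translation/modulation invariance of the Luxemburg norm together with the boundedness of $T_{m_1},T_{m_2}$ (your extra check that $T_{m_1}f,T_{m_2}g\in\mathcal P(\R)$ is a welcome detail the paper leaves implicit). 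The only nitpick is a harmless sign slip in (c): with the convention $\widehat{\tau_yf}=M_{-y}\hat f$ one has $e^{2\pi i\xi_0\xi}\hat f(\xi)=\widehat{\tau_{-\xi_0}f}(\xi)$, so the translated functions are $\tau_{-\xi_0}f,\tau_{-\eta_0}g$ as in the paper, which changes nothing in the norm estimate.
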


\begin{proof}
For each $f,g\in \mathcal P(\R)$ the following formulae are straightforward   \be
B_{m_1 m m_2}(f,g)= B_m(T_{m_1}f, T_{m_2}g).\ee \be
B_{\tau_{(\xi_0,\eta_0)} m}(f,g)= M_{\xi_0+\eta_0}B_m(
M_{-\xi_0}f,M_{-\eta_0}g).\ee \be B_{M_{(\xi_0,\eta_0)} m}(f,g)=
B_m( \tau_{-\xi_0}f,\tau_{-\eta_0}g).\ee
The result now follows easily.
\end{proof}

\begin{proposition} \label{l2} Let $\Phi_i$ for $i=1,2,3$ be Young functions. If $m\in \MB_{(\Phi_1,\Phi_2,\Phi_3)}(\R)$ and $t>0$ then
   $D_tm\in \MBR$. Moreover and $$\|D_tm \|_{(\Phi_1,\Phi_2,\Phi_3)}\le C_{\Phi_3}({1/t}) C_{\Phi_1}(t)C_{\Phi_2}(t)\| m\|_{(\Phi_1,\Phi_2,\Phi_3)}.$$

\end{proposition}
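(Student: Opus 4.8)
The plan is to reduce the statement for $D_tm$ to a change of variables in the defining integral $B_m$, exactly as was done for the translation and modulation cases in Proposition \ref{1}. First I would write out $B_{D_tm}(f,g)$ for $f,g\in\mathcal P(\R)$ and perform the substitution $\xi\mapsto t\xi$, $\eta\mapsto t\eta$ in the double integral defining it. A direct computation should give a formula of the shape
\[
B_{D_tm}(f,g)(x) = \frac{1}{t^2}\, D_{1/t}\big(B_m(D_tf,\,D_tg)\big)(x),
\]
or some close variant, using the identities $\widehat{D_\lambda f}=\hat f_\lambda$ recalled in the introduction; the constant and the dilation parameters should work out because each of $\hat f$, $\hat g$ and the exponential $e^{2\pi i(\xi+\eta)x}$ transforms in a controlled way under the scaling. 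I expect that getting this identity with the right bookkeeping of the factor $1/t^2$ versus the dilation on the outside is the one place where care is needed; everything else is then a mechanical norm estimate.

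Second, once the pointwise identity is in hand, I would estimate norms. Applying $N_{\Phi_3}$ to both sides and using the definition $C_{\Phi_3}(1/t)=\|D_{1/t}\|_{L^{\Phi_3}\to L^{\Phi_3}}$ bounds the left side by $C_{\Phi_3}(1/t)$ times $N_{\Phi_3}(B_m(D_tf,D_tg))$ up to whatever scalar the identity produced (the scalar should be absorbed by the dilation norm so that no extra factor survives — this is the reason $C_{\Phi_3}(1/t)$ and not something else appears). Then, since $m\in\MB_{(\Phi_1,\Phi_2,\Phi_3)}(\R)$, we have $N_{\Phi_3}(B_m(D_tf,D_tg))\le \|m\|_{(\Phi_1,\Phi_2,\Phi_3)}\,N_{\Phi_1}(D_tf)\,N_{\Phi_2}(D_tg)$. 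Finally $N_{\Phi_1}(D_tf)\le C_{\Phi_1}(t)N_{\Phi_1}(f)$ and $N_{\Phi_2}(D_tg)\le C_{\Phi_2}(t)N_{\Phi_2}(g)$ by the definition of $C_{\Phi_i}(t)$. Chaining these inequalities yields
\[
N_{\Phi_3}(B_{D_tm}(f,g)) \le C_{\Phi_3}(1/t)\,C_{\Phi_1}(t)\,C_{\Phi_2}(t)\,\|m\|_{(\Phi_1,\Phi_2,\Phi_3)}\,N_{\Phi_1}(f)\,N_{\Phi_2}(g),
\]
which is exactly the claimed bound and in particular shows $D_tm\in\MB_{(\Phi_1,\Phi_2,\Phi_3)}(\R)$.

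The main obstacle, as noted, is purely computational: verifying the scaling identity for $B_{D_tm}$ with the correct normalizing constant. The cleanest way to handle it is to note that $D_tf\in\mathcal P(\R)$ whenever $f\in\mathcal P(\R)$, so $B_m(D_tf,D_tg)$ is legitimately defined, and then to track the three substitutions one at a time: scaling $\xi$ and $\eta$ contributes a Jacobian factor, while the exponential becomes $e^{2\pi i(\xi+\eta)(tx)}$ which is precisely the dilation $D_t$ acting in $x$ after the rescaling. One should double-check that the outer dilation really is $D_{1/t}$ (equivalently that the norm factor is $C_{\Phi_3}(1/t)$ and not $C_{\Phi_3}(t)$) by testing on a simple $m$, e.g. $m\equiv 1$, but this is routine. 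No genuinely hard analysis is involved; all the substantive content about dilations on Orlicz spaces is already packaged in the quantities $C_{\Phi_i}(\cdot)$ from Proposition \ref{p0} and Theorem \ref{submul}.
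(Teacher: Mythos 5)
Your approach is exactly the paper's: establish the dilation identity for $B_{D_tm}$ by a change of variables and then chain the estimates through $C_{\Phi_3}(1/t)$, $C_{\Phi_1}(t)$, $C_{\Phi_2}(t)$, so the structure of the argument is correct. One correction on the bookkeeping you flagged: the identity is exactly $B_{D_tm}(f,g)=D_{1/t}\,B_m(D_tf,D_tg)$ with no $1/t^2$, because $\widehat{D_tf}(\xi)\widehat{D_tg}(\eta)=\frac{1}{t^2}\hat f(\xi/t)\hat g(\eta/t)$ and the Jacobian $t^2$ of the substitution $\xi\mapsto t\xi$, $\eta\mapsto t\eta$ cancels it, the exponential becoming $e^{2\pi i(\xi+\eta)tx}$, i.e.\ an outer $D_t$ (equivalently $B_m(D_tf,D_tg)=D_t B_{D_tm}(f,g)$). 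Your remark that a leftover scalar "would be absorbed by the dilation norm'' is not right: the Luxemburg norm is positively homogeneous, so $C_{\Phi_3}(1/t)$ cannot swallow a constant, and had the factor $1/t^2$ genuinely been present it would appear in the final bound; fortunately your own sanity check with $m\equiv 1$ shows it is not there, and with that the rest of your estimate gives precisely the stated inequality.
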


\begin{proof} We first observe that
\be \label{dilation}
B_{D_tm}( f,g)= D_{1/t}B_m(D_t f,D_tg)
\ee
for each $f,g\in \mathcal P(\R)$.
 Indeed, \ba
 B_m(D_t f,D_tg)(x)&=& \int_{\R^2}
 \frac{1}{t}\hat f(\frac{\xi}{t})\frac{1}{t}\hat g(\frac{\eta}{t})m(\xi, \eta) e^{2\pi
 i(\xi+\eta)x}d\xi d\eta\\
 &=& \int_{\R^2}\hat
 f(\xi)\hat g(\eta)m(t\xi, t\eta) e^{2\pi
 i(\xi+\eta)tx}d\xi d\eta\\
 &=& D_tB_{D_t m}(f,g)(x).
 \ea
  This gives
 \ba N_{\Phi_3} (B_{D_t m}(f,g))&\le & C_{\Phi_3}(1/t)N_{\Phi_3}(B_m(D_t f,D_tg))\\
 &\le &  C_{\Phi_3}({1/t})\| m\|_{(\Phi_1,\Phi_2,\Phi_3)} N_{\Phi_1}(D_t f) N_{\Phi_2}(D_tg)\\
&\le &  C_{\Phi_3}({1/t})\| m\|_{(\Phi_1,\Phi_2,\Phi_3)} C_{\Phi_1}(t)C_{\Phi_2}(t)N_{\Phi_1}( f) N_{\Phi_2}(g).
 \ea
 This completes the proof.
\end{proof}
Let us combine the previous results to  get new bilinear
multipliers from a given one.
\begin{proposition} \label{convo} Let $\Phi_i$ for $i=1,2,3$ be Young functions, $\varphi\in L^1(\R^2)$ and $m\in \MBR$. Then

\begin{enumerate}[\sf \ \ \ (a)]
\item $\varphi*m\in \MBR$ and
 $\|\varphi*m\|_{(\Phi_1,\Phi_2,\Phi_3)}\le \|\varphi\|_1 \|m\|_{(\Phi_1,\Phi_2,\Phi_3)}.$

\item $\hat\varphi m\in \MBR$ and
 $\|\hat\varphi m\|_{(\Phi_1,\Phi_2,\Phi_3)}\le \|\varphi\|_1
\|m\|_{(\Phi_1,\Phi_2,\Phi_3)}.$

\item Let $W(t)= C_{\Phi_3}({1/t}) C_{\Phi_1}(t)C_{\Phi_2}(t)$ and  $\psi\in L^1(\R^+, W)$ and assume that $t\to m(t\xi,t\eta)\psi(t)$ is integrable in $\R^+$ for each $(\xi,\eta)\in \R^2$. Define $m_\psi(\xi,\eta)=\int_0^\infty m(t\xi,t\eta)\psi(t)dt.$ Then  $m_\psi\in \MBR$ and
 $\|m_\psi\|_{(\Phi_1,\Phi_2,\Phi_3)}\le \|\psi\|_{L^1(\R^+, W)}
\|m\|_{(\Phi_1,\Phi_2,\Phi_3)}.$
\end{enumerate}
\end{proposition}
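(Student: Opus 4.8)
The plan is to derive all three parts from the previously established building blocks, treating (a) and (b) as essentially linear-averaging statements and (c) as a continuous version of the same idea combined with Proposition~\ref{l2}.

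For part (a), I would write the convolution explicitly as $(\varphi * m)(\xi,\eta) = \int_{\R^2} \varphi(u,v)\, \tau_{(u,v)}m(\xi,\eta)\, du\, dv$, recognizing it as an ``average'' of translates of $m$. The key computation is to exchange the order of integration in $B_{\varphi*m}(f,g)(x)$: since $\hat f$ and $\hat g$ are compactly supported (so all integrals are over a fixed compact set and everything is absolutely convergent by Fubini, using $\varphi\in L^1$), one obtains
\[
B_{\varphi * m}(f,g)(x) = \int_{\R^2} \varphi(u,v)\, B_{\tau_{(u,v)}m}(f,g)(x)\, du\, dv.
\]
Then I would apply Minkowski's integral inequality for the $N_{\Phi_3}$-norm (valid since $L^{\Phi_3}(\R)$ is a Banach space), followed by part~(b) of Proposition~\ref{1}, which gives $N_{\Phi_3}(B_{\tau_{(u,v)}m}(f,g)) \le \|m\|_{(\Phi_1,\Phi_2,\Phi_3)} N_{\Phi_1}(f) N_{\Phi_2}(g)$ uniformly in $(u,v)$. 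Integrating against $|\varphi|$ produces the factor $\|\varphi\|_1$ and finishes (a).

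Part (b) is the Fourier-dual of (a): since $\widehat{\varphi}(\xi,\eta) = \int_{\R^2} \varphi(u,v) e^{-2\pi i(u\xi + v\eta)}\, du\, dv$, multiplication by $\widehat\varphi$ corresponds to averaging modulations of $m$ via $M_{(-u,-v)}m(\xi,\eta) = e^{-2\pi i(u\xi+v\eta)} m(\xi,\eta)$. So I would repeat the argument of (a) verbatim, exchanging integration order to get $B_{\widehat\varphi m}(f,g)(x) = \int_{\R^2}\varphi(u,v)\, B_{M_{(-u,-v)}m}(f,g)(x)\, du\, dv$, apply Minkowski and then part~(c) of Proposition~\ref{1} (norm invariance under modulation of the symbol), again picking up $\|\varphi\|_1$. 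Alternatively one can note $\widehat\varphi m$ is a genuine convolution on the Fourier side, but the direct modulation-averaging route is cleaner and self-contained.

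For part (c), the structure is identical but now the averaging is over dilations rather than translations/modulations. Writing $m_\psi(\xi,\eta) = \int_0^\infty (D_t m)(\xi,\eta)\,\psi(t)\, dt$ (where $D_t m(\xi,\eta) = m(t\xi, t\eta)$ as in the paper's convention), the hypothesis that $t \mapsto m(t\xi,t\eta)\psi(t)$ is integrable for each $(\xi,\eta)$ plus compact support of $\hat f, \hat g$ justifies Fubini to give $B_{m_\psi}(f,g)(x) = \int_0^\infty \psi(t)\, B_{D_t m}(f,g)(x)\, dt$. Minkowski's integral inequality in $N_{\Phi_3}$ then reduces matters to the pointwise-in-$t$ bound $N_{\Phi_3}(B_{D_t m}(f,g)) \le W(t)\, \|m\|_{(\Phi_1,\Phi_2,\Phi_3)} N_{\Phi_1}(f) N_{\Phi_2}(g)$, which is exactly Proposition~\ref{l2}. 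Integrating $|\psi(t)| W(t)$ produces $\|\psi\|_{L^1(\R^+,W)}$, completing the proof.

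The main obstacle, and the only point requiring genuine care, is the justification of the Fubini/Tonelli interchange in each part --- particularly in (c), where $W(t)$ can blow up and $m$ need not be bounded, so one must lean on the stated integrability hypothesis together with the fact that $\widehat f, \widehat g \in L^\infty$ have common compact support (restricting all $\xi,\eta$-integrals to a fixed compact set) to get absolute convergence of the triple integral defining $B_{m_\psi}(f,g)(x)$; once that is in hand the rest is routine. A secondary (very minor) point is checking that Minkowski's integral inequality applies to the Luxemburg norm $N_{\Phi_3}$, which follows from $L^{\Phi_3}(\R)$ being a Banach space and the integrand being Bochner-measurable and integrable into $L^{\Phi_3}(\R)$.
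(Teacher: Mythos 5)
Your proposal is correct and follows essentially the same route as the paper: in each part you rewrite $B_{\varphi*m}$, $B_{\hat\varphi m}$, $B_{m_\psi}$ as an average of $B_{\tau_{(u,v)}m}$, $B_{M_{(-u,-v)}m}$, $B_{D_t m}$ respectively, then apply the vector-valued Minkowski inequality together with Proposition~\ref{1} (parts (b),(c)) and Proposition~\ref{l2}. Your extra care about the Fubini interchange and the applicability of Minkowski's inequality to the Luxemburg norm only makes explicit what the paper leaves implicit.
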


\begin{proof}
(a) Note that \ba B_{\varphi* m}(f,g)(x)&=&\int_{\R^2} \hat f(\xi)
\hat g(\eta) (\int_{\R^2} m(\xi-u, \eta-v) \varphi(u,v)dudv)e^{2\pi
i(\xi+\eta)x}d\xi d\eta\\
&=&\int_{\R^2} \big(\int_{\R^2}\hat f(\xi) \hat g(\eta)
 m(\xi-u, \eta-v) e^{2\pi
i(\xi+\eta)x}d\xi d\eta\big)\varphi(u,v)dudv\\
&=&\int_{\R^2} B_{\tau_{(u,v)}m}(f, g)(x)\varphi(u,v)dudv. \ea

 From the vector-valued Minkowski
inequality  and Proposition \ref{1} part (b), we have \ba
N_{\Phi_3}(B_{\varphi* m}(f,g)) &\le &\int_{\R^2}
N_{\Phi_3}(B_{\tau_{(u,v)}m}(f, g)) |\varphi(u,v)|dudv\\
&\le & \|m\|_{(\Phi_1,\Phi_2,\Phi_3)} N_{\Phi_1}(f) N_{\Phi_2}(g) \|\varphi\|_1.\ea

(b) Observe that\ba B_{\hat \varphi  m}(f,g)(x)&=&\int_{\R^2} \hat
f(\xi) \hat g(\eta) (\int_{\R^2} M_{(-u, -v)}m(\xi, \eta)
\varphi(u,v)dudv)e^{2\pi
i(\xi+\eta)x}d\xi d\eta\\
&=&\int_{\R^2}  B_{M_{(-u, -v)}m}(f,g)(x)\varphi(u,v)dudv.\\
\ea

Argue as above, using now Proposition \ref{1} part (c), to
conclude the result.

(c) Use the formula
  \ba
B_{m_\psi}(f,g)(x)&=&\int_{\R^2} \hat f(\xi) \hat g(\eta)
(\int_{0}^\infty D_tm(\xi, \eta) \psi(t)
dt)e^{2\pi
i(\xi+\eta)x}d\xi d\eta\\
&=&\int_{0}^\infty  B_{D_tm}(f,g)(x)
\psi(t) dt\\
\ea
and Proposition \ref{l2} to finish the proof.
\end{proof}

Let us now present an elementary example of bilinear multipliers.
If   $\mu$  is a Borel regular measure in $\R$ we denote
$\hat\mu(\xi)=\int_\R e^{-2\pi i x\xi}d\mu(x)$ its Fourier
transform.

\begin{proposition} \label{meas} Let $\Phi_1$, $\Phi_2$ and $\Phi_3$ be Young functions such that
\[ \Phi_1^{-1}(x) \Phi_2^{-1}(x) \leq \Phi_3^{-1}(x), x\in \R. \]
If  $(\alpha,\beta)\in \R^2$ and $m(\xi,\eta)=\hat{\mu}(\alpha\xi + \beta\eta)$ where $\mu$ is a regular Borel measure on $\R$ then $m\in \MBR$  and $\Vert m \Vert_{(\Phi_1,\Phi_2,\Phi_3)} \leq 2\Vert\mu \Vert_1$.
\end{proposition}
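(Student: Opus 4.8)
The plan is to reduce the claim to the Hölder-type inequality \eqref{holder} by rewriting $B_m(f,g)$ as a superposition of translates of the product $fg$. First I would use the Fourier inversion together with the definition of $\hat\mu$ to compute, for $f,g\in\mathcal P(\R)$,
\[
B_m(f,g)(x)=\int_\R\Big(\int_{\R^2}\hat f(\xi)\hat g(\eta)e^{2\pi i(\xi+\eta)x}e^{-2\pi i(\alpha\xi+\beta\eta)t}\,d\xi\,d\eta\Big)\,d\mu(t),
\]
and recognize the inner integral as $f(x-\alpha t)\,g(x-\beta t)=\tau_{\alpha t}f(x)\,\tau_{\beta t}g(x)$. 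Thus $B_m(f,g)(x)=\int_\R \tau_{\alpha t}f(x)\,\tau_{\beta t}g(x)\,d\mu(t)$; the interchange of integrals is justified since $\hat f,\hat g$ are compactly supported and bounded (so the double integral in $\xi,\eta$ is over a compact set) and $\mu$ is a finite regular Borel measure.

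Next I would apply the vector-valued Minkowski inequality in $L^{\Phi_3}(\R)$ to this integral representation, exactly as in the proof of Proposition \ref{convo}(a), to obtain
\[
N_{\Phi_3}(B_m(f,g))\le \int_\R N_{\Phi_3}\big(\tau_{\alpha t}f\cdot\tau_{\beta t}g\big)\,d|\mu|(t).
\]
Now for each fixed $t$, the hypothesis $\Phi_1^{-1}(x)\Phi_2^{-1}(x)\le\Phi_3^{-1}(x)$ is precisely condition \eqref{hi}, so the generalized Hölder inequality \eqref{holder} gives $N_{\Phi_3}(\tau_{\alpha t}f\cdot\tau_{\beta t}g)\le 2\,N_{\Phi_1}(\tau_{\alpha t}f)\,N_{\Phi_2}(\tau_{\beta t}g)$. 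Since the Orlicz norms are translation invariant, this equals $2\,N_{\Phi_1}(f)\,N_{\Phi_2}(g)$, a constant in $t$. Integrating against $|\mu|$ yields $N_{\Phi_3}(B_m(f,g))\le 2\|\mu\|_1 N_{\Phi_1}(f)N_{\Phi_2}(g)$, which is the assertion.

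I do not expect any serious obstacle here; the only point requiring a little care is the justification of Fubini to get the pointwise formula $B_m(f,g)(x)=\int_\R\tau_{\alpha t}f(x)\tau_{\beta t}g(x)\,d\mu(t)$ and the applicability of Minkowski's integral inequality with respect to the (possibly complex, hence signed via its total variation) measure $\mu$ — both of which are routine given that $f,g\in\mathcal P(\R)$ have bounded, compactly supported Fourier transforms and $\|\mu\|_1<\infty$. Everything else is a direct invocation of \eqref{holder} and the translation invariance of $N_{\Phi_i}$.
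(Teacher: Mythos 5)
Your argument is correct and coincides with the paper's own proof: both rewrite $B_m(f,g)(x)$ as $\int_\R \tau_{\alpha t}f(x)\,\tau_{\beta t}g(x)\,d\mu(t)$ via Fubini, then apply the vector-valued Minkowski inequality, the generalized H\"older inequality \eqref{holder}, and translation invariance of the Luxemburg norms to obtain the bound $2\|\mu\|_1 N_{\Phi_1}(f)N_{\Phi_2}(g)$. No substantive differences.
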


\begin{proof} Let us first rewrite the value $B_m (f,g)$ for each $f,g\in \mathcal S(\R)$ as
follows: \ba B_m(f,g)(x)&=&\int_{\R^2} \hat f(\xi)\hat g(\eta)
\hat \mu(\alpha\xi+\beta\eta)e^{2\pi i (\xi+\eta)x}d\xi d\eta\\
&=&\int_{\R^2} \hat f(\xi)\hat g(\eta) (\int_\R e^{-2\pi
i(\alpha\xi+
\beta\eta)t} d\mu(t))e^{2\pi i (\xi+\eta)x}d\xi d\eta\\
&=&\int_\R (\int_{\R^2} \hat f(\xi)\hat g(\eta)e^{2\pi i (x-\alpha
t)\xi}e^{2\pi i(x-\beta t)\eta}d\xi d\eta )d\mu(t)\\&=&\int_\R
f(x-\alpha t)g(x-\beta t)d\mu(t)\\
&=&\int_\R
\tau_{\alpha t}f(x)\tau_{\beta t} g(x)d\mu(t).\ea Hence, using Minkowski's
inequality,  (\ref{holder}) and invariance under traslations one gets
\begin{eqnarray*} N_{\Phi_3}(B_m(f,g))&\le& \int_\R N_{\Phi_3}(\tau_{\alpha t}f\tau_{\beta t} g)d|\mu|(t)\\
&\le& 2\int_\R N_{\Phi_1}(f) N_{\Phi_2}(g)d|\mu|(t)\\
&=& 2\|\mu\|_1 N_{\Phi_1}(f) N_{\Phi_2}(g).
 \end{eqnarray*}
 This gives the result.
\end{proof}

This basic example combined with the procedures exhibited in Proposition \ref{convo} produces a number of multipliers in this setting.

Also, if we consider a complementary pair of Young functions, then we can give the following result as a corollary of Proposition \ref{meas}.

\begin{corollary}
Let $(\Phi,\Psi)$ be a complementary pair of Young functions. If  $(\alpha,\beta)\in \R^2$ and $m(\xi,\eta)=\hat{\mu}(\alpha\xi + \beta\eta)$ where $\mu$ is a regular Borel measure on $\R$ then $m\in \mathcal {BM}_{(\Phi, \Psi, 1)}$  and $\Vert m \Vert_{(\Phi,\Psi,1)} \leq 4\Vert\mu \Vert_1$.
\end{corollary}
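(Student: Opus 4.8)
The plan is to deduce this corollary directly from Proposition~\ref{meas} by exhibiting the pair $(\Phi,\Psi)$ as an admissible triple $(\Phi_1,\Phi_2,\Phi_3)$ with $\Phi_3$ essentially the identity function, so that $L^{\Phi_3}(\R)=L^1(\R)$. Concretely, I would take $\Phi_1=\Phi$, $\Phi_2=\Psi$ and $\Phi_3(x)=|x|$, and check the hypothesis $\Phi_1^{-1}(x)\Phi_2^{-1}(x)\le \Phi_3^{-1}(x)$ of Proposition~\ref{meas}. Since $\Phi_3(x)=|x|$ is its own inverse, $\Phi_3^{-1}(x)=x$ for $x\ge 0$, and the required inequality becomes exactly $\Phi^{-1}(x)\Psi^{-1}(x)\le x$, which is the left-hand half of the complementary-pair estimate~(\ref{comp}). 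Thus the hypothesis is satisfied for free.

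Next I would record that $L^{\Phi_3}(\R)=L^1(\R)$ with $N_{\Phi_3}=N_1=\|\cdot\|_1$ when $\Phi_3(x)=|x|$: indeed $\int_\R \Phi_3(|f(x)|/\lambda)\,dx=\|f\|_1/\lambda\le 1$ iff $\lambda\ge\|f\|_1$, so the Luxemburg norm is the $L^1$ norm. Hence $\MB_{(\Phi,\Psi,\Phi_3)}(\R)$ coincides (isometrically, as far as the operator norm is concerned) with $\mathcal{BM}_{(\Phi,\Psi,1)}$ in the notation of the corollary. Applying Proposition~\ref{meas} with this triple gives $m\in\MB_{(\Phi,\Psi,\Phi_3)}(\R)=\mathcal{BM}_{(\Phi,\Psi,1)}$ with $\|m\|_{(\Phi,\Psi,1)}\le 2\|\mu\|_1$.

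At this point one should observe that the constant in the statement is $4$ rather than $2$, so presumably the intended argument does not use $\Phi_3(x)=|x|$ directly but rather applies Proposition~\ref{meas} with an auxiliary Young function $\Phi_3$ satisfying $\Phi^{-1}(x)\Psi^{-1}(x)\le\Phi_3^{-1}(x)$ and $\Phi_3^{-1}(x)\le 2x$ (for instance $\Phi_3^{-1}(x)=\Phi^{-1}(x)\Psi^{-1}(x)$, or $\Phi_3(x)=|x|$ up to the factor coming from~(\ref{comp})), and then uses the equivalence $N_{\Phi_3}\le 2\|\cdot\|_1$ coming from the right-hand half of~(\ref{comp}) together with Lemma~\ref{l1} or a direct Luxemburg-norm comparison. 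Combining the factor $2$ from Proposition~\ref{meas} with the factor $2$ from $N_1(h)\le 2N_{\Phi_3}(h)$ (equivalently $N_{\Phi_3}(h)\le 2\|h\|_1$, wait --- one wants to bound $\|B_m(f,g)\|_1$, so one needs $\|h\|_1\le C\,N_{\Phi_3}(h)$) yields the bound $4\|\mu\|_1$.

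The only genuine obstacle is this bookkeeping of constants: one must decide which $L^1$-type space to invoke and in which direction the norm equivalence~(\ref{comp}) is used, so that the final estimate reads $\|m\|_{(\Phi,\Psi,1)}\le 4\|\mu\|_1$ rather than $2\|\mu\|_1$. Once the right auxiliary $\Phi_3$ is fixed, everything else is an immediate specialization of Proposition~\ref{meas}, with the displayed computation $B_m(f,g)(x)=\int_\R \tau_{\alpha t}f(x)\,\tau_{\beta t}g(x)\,d\mu(t)$ and Minkowski's inequality doing all the analytic work. I would therefore present the proof as: fix $\Phi_3$ with $\Phi_3^{-1}=\Phi^{-1}\Psi^{-1}$; note $x\le\Phi_3^{-1}(x)\le 2x$ by~(\ref{comp}); apply Proposition~\ref{meas} to get $N_{\Phi_3}(B_m(f,g))\le 2\|\mu\|_1 N_\Phi(f)N_\Psi(g)$; and finally use $\|B_m(f,g)\|_1\le 2N_{\Phi_3}(B_m(f,g))$, which follows from $\Phi_3^{-1}(x)\le 2x$ via Lemma~\ref{l1} applied on sets of finite measure together with a routine approximation, to conclude.
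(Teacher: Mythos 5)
Your first step misreads the direction of~(\ref{comp}): that estimate says $|x|\le \Phi^{-1}(x)\Psi^{-1}(x)\le 2|x|$, so with $\Phi_3(x)=|x|$ the hypothesis $\Phi^{-1}(x)\Psi^{-1}(x)\le \Phi_3^{-1}(x)=x$ of Proposition~\ref{meas} is the \emph{reverse} of what a complementary pair provides, and it genuinely fails in general (e.g.\ for the self-complementary pair $\Phi(x)=\Psi(x)=x^{2}/2$ one has $\Phi^{-1}(x)\Psi^{-1}(x)=2x>x$). Consequently the intermediate claim $\Vert m\Vert_{(\Phi,\Psi,1)}\le 2\Vert\mu\Vert_1$ is unjustified; the mismatch with the constant $4$ is not ``bookkeeping'' but the symptom of this wrong inequality, and only the right-hand half of~(\ref{comp}) is usable.

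Your final plan is essentially the paper's proof, with one remaining wrinkle: to invoke Proposition~\ref{meas} you need $\Phi_3$ to be a Young function, and it is not automatic that $x\mapsto\Phi^{-1}(x)\Psi^{-1}(x)$ is the inverse of one (a product of concave increasing functions need not be concave), so ``fix $\Phi_3$ with $\Phi_3^{-1}=\Phi^{-1}\Psi^{-1}$'' needs either justification or replacement. The clean choice, which is exactly what the paper does, is $\Phi_3(x)=\frac{1}{2}|x|$: then $\Phi_3^{-1}(x)=2x$, the hypothesis of Proposition~\ref{meas} is precisely the upper bound in~(\ref{comp}), and $N_{\Phi_3}(h)=\frac{1}{2}\Vert h\Vert_1$ exactly, so $\frac{1}{2}\Vert B_m(f,g)\Vert_1=N_{\Phi_3}(B_m(f,g))\le 2\Vert\mu\Vert_1 N_\Phi(f)N_\Psi(g)$ gives the stated bound $4\Vert\mu\Vert_1$ with no further comparison needed. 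Your substitute comparison $\Vert h\Vert_1\le 2N_{\Phi_3}(h)$ deduced from $\Phi_3^{-1}(x)\le 2x$ is itself correct (it yields $\Phi_3(t)\ge t/2$ via~(\ref{new}), and no appeal to Lemma~\ref{l1} or approximation is required), so once the auxiliary $\Phi_3$ is chosen to be an honest Young function the argument closes.
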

\begin{proof}
It is enough to take $\Phi_1=\Phi$, $\Phi_2=\Psi$ and $\Phi_3(x)=\frac{1}{2}|x|$, $x\in \R$ in Proposition \ref{meas}, since $\Phi$ and $\Psi$ satisfy the inequality \eqref{comp}, noticing that $L^{\Phi_3}(\R)=L^1(\R)$
and $N_{\Phi_3}(f)=\frac{1}{2}\|f\|_1$ for any $f\in L^1(\R)$.
\end{proof}

Let us now give a necessary condition for multipliers homogeneous of degree $0$. This will depend upon the Boyd indices of the spaces.
Recall that for a rearrangement invariant Banach space $X$ one defines
$$h_X(t)= \sup_{f\ne 0} \frac{\|D_{1/t}f^*\|_{\tilde X}}{\|f^*\|_{\tilde X}}, \quad t>0$$
where $\tilde X$ is the r.i. space defined on $(0,\infty)$ with the same distribution function.
The Boyd indices (see \cite[page 149]{BS}) are given by
$$\underline\alpha _X= \lim_{t\to 0} \frac{\log h_X(t)}{\log t}, \quad \overline\alpha _X= \lim_{t\to \infty} \frac{\log h_X(t)}{\log t}.$$
We denote by $\underline\alpha_\Phi$  and $\overline\alpha_\Phi$ the case $X=L^\Phi(\R)$.

\begin{proposition} Let $m\in \MBR$ a non zero multiplier such that  $m(t\xi,t\eta)= m(\xi,\eta)$ for any $t>0$. Then
\be \label{fo1}
\overline\alpha _{\Phi_3}\ge \underline\alpha _{\Phi_1}+\underline\alpha _{\Phi_2}
\ee
and
\be \label{f2}
\underline\alpha _{\Phi_3}\le \overline\alpha _{\Phi_1}+\overline\alpha _{\Phi_2}
\ee
\end{proposition}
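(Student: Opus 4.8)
The idea is to exploit the homogeneity of $m$ together with the dilation formula (\ref{dilation}) to relate the norm of $B_m$ to the behaviour of the dilation constants $C_{\Phi_i}$, and then translate the resulting inequalities on $C_{\Phi_i}$ into inequalities on Boyd indices. First I would note that since $m(t\xi,t\eta)=m(\xi,\eta)$, we have $D_tm=m$ for all $t>0$, so Proposition \ref{l2} applied to $m$ itself gives
\[
\|m\|_{(\Phi_1,\Phi_2,\Phi_3)}=\|D_tm\|_{(\Phi_1,\Phi_2,\Phi_3)}\le C_{\Phi_3}(1/t)\,C_{\Phi_1}(t)\,C_{\Phi_2}(t)\,\|m\|_{(\Phi_1,\Phi_2,\Phi_3)}.
\]
Because $m$ is non-zero, $\|m\|_{(\Phi_1,\Phi_2,\Phi_3)}\in(0,\infty)$, so we may cancel it and obtain
\[
1\le C_{\Phi_3}(1/t)\,C_{\Phi_1}(t)\,C_{\Phi_2}(t)\qquad\text{for all }t>0.
\]
This single inequality is the whole engine; both (\ref{fo1}) and (\ref{f2}) will come from it by looking at $t\to\infty$ and $t\to 0$ respectively (using $t\leftrightarrow 1/t$ in one of the two cases).

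Next I would connect $C_\Phi(\lambda)$ with the Boyd function $h_{L^\Phi}$. Since $L^\Phi(\R)$ is rearrangement invariant and $D_\lambda$ acts on it, one has essentially $C_\Phi(\lambda)=\|D_\lambda\|_{L^\Phi\to L^\Phi}$, and by the definition of $h_X$ with $X=L^\Phi(\R)$ (taking $f=f^*$ supported on $(0,\infty)$, and using that the dilation norm on the r.i. space $\widetilde X$ equals that on $X$) we get $h_{L^\Phi}(t)=C_\Phi(1/t)$, i.e. $C_\Phi(\lambda)=h_{L^\Phi}(1/\lambda)$. Then the Boyd indices can be rewritten as
\[
\underline\alpha_\Phi=\lim_{\lambda\to\infty}\frac{\log C_\Phi(\lambda)}{-\log\lambda}=\lim_{\lambda\to\infty}\frac{-\log C_\Phi(\lambda)}{\log\lambda},\qquad
\overline\alpha_\Phi=\lim_{\lambda\to 0}\frac{-\log C_\Phi(\lambda)}{\log\lambda},
\]
equivalently $\overline\alpha_\Phi=\lim_{t\to\infty}\frac{\log C_\Phi(1/t)}{\log t}$ and $\underline\alpha_\Phi=\lim_{t\to 0}\frac{\log C_\Phi(1/t)}{\log t}$. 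One should be slightly careful about signs and about which limit ($t\to 0$ vs $t\to\infty$) corresponds to which index; the submultiplicativity of $C_\Phi$ (noted just before Proposition \ref{p0}) guarantees these limits exist, and Proposition \ref{p0} gives the crude two-sided bound $1/\max\{1,\lambda\}\le C_\Phi(\lambda)\le 1/\min\{1,\lambda\}$ which keeps the logarithms finite.

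Finally I would take logarithms in $1\le C_{\Phi_3}(1/t)\,C_{\Phi_1}(t)\,C_{\Phi_2}(t)$, i.e.
\[
0\le \log C_{\Phi_3}(1/t)+\log C_{\Phi_1}(t)+\log C_{\Phi_2}(t),
\]
divide by $\log t>0$ and let $t\to\infty$: the term $\frac{\log C_{\Phi_3}(1/t)}{\log t}\to \overline\alpha_{\Phi_3}$, while $\frac{\log C_{\Phi_i}(t)}{\log t}=-\frac{\log C_{\Phi_i}(1/(1/t))}{\log(1/t)}\cdot(-1)\to -\underline\alpha_{\Phi_i}$ for $i=1,2$ (here using the $t\to 0$ form of $\underline\alpha$ after substituting $s=1/t$). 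This yields $0\le \overline\alpha_{\Phi_3}-\underline\alpha_{\Phi_1}-\underline\alpha_{\Phi_2}$, which is (\ref{fo1}). Letting instead $t\to 0$ (equivalently replacing $t$ by $1/t$ and sending $t\to\infty$) and dividing by $\log t<0$ reverses the inequality and produces $\underline\alpha_{\Phi_3}\le \overline\alpha_{\Phi_1}+\overline\alpha_{\Phi_2}$, which is (\ref{f2}).

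The main obstacle I anticipate is purely bookkeeping rather than conceptual: getting the identification $C_\Phi(\lambda)=h_{L^\Phi}(1/\lambda)$ and the signs in the Boyd-index limits exactly right, since the paper's $h_X$ is defined with $D_{1/t}$ and the indices are defined as $t\to 0$ and $t\to\infty$, whereas $C_\Phi$ is naturally indexed by the dilation parameter directly. Once the dictionary between $C_{\Phi_i}$ and $\underline\alpha_{\Phi_i},\overline\alpha_{\Phi_i}$ is pinned down, the rest is an immediate consequence of the scalar inequality $1\le C_{\Phi_3}(1/t)C_{\Phi_1}(t)C_{\Phi_2}(t)$ coming from Proposition \ref{l2}.
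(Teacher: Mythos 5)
Your proposal is correct and follows essentially the same route as the paper: it cancels $\|m\|_{(\Phi_1,\Phi_2,\Phi_3)}$ in the inequality from Proposition \ref{l2} using $D_tm=m$, identifies $C_\Phi(t)=h_{L^\Phi}(1/t)$, and then takes logarithms, divides by $\log t$, and passes to the limits $t\to\infty$ and $t\to 0$ to obtain (\ref{fo1}) and (\ref{f2}). The sign bookkeeping you flag is handled correctly, so nothing essential is missing.
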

\begin{proof} From assumption $D_t m=  m$ for $t>0$. Using now Proposition
\ref{l2} we can write $$\|m \|_{(\Phi_1,\Phi_2,\Phi_3)}\le C_{\Phi_3}({1/t}) C_{\Phi_1}(t)C_{\Phi_2}(t)\| m\|_{(\Phi_1,\Phi_2,\Phi_3)}, \quad t>0.$$
 It is elementary to show that
 $C_\Phi(t)=h_{L^\Phi}(1/t)$. Hence, denoting by  $h_\Phi=h_{L^\Phi}$, we have
 $$ h_{\Phi_3}({t}) h_{\Phi_1}(1/t)h_{\Phi_2}(1/t)\ge 1, \quad t>0.$$
Therefore
$$ \log h_{\Phi_3}(t)+ \log h_{\Phi_1}(1/t)+ \log h_{\Phi_2}(1/t)  \ge 0, \quad t>0$$
This shows that
$$\frac{\log h_{\Phi_3}(t)}{\log t} -\frac{\log h_{\Phi_1}(1/t)}{\log(1/t)} -\frac{\log h_{\Phi_2}(1/t)}{\log(1/t)}\ge 0, \quad t\ge 1$$
$$\frac{\log h_{\Phi_3}(t)}{\log t} -\frac{\log h_{\Phi_1}(1/t)}{\log(1/t)} -\frac{\log h_{\Phi_2}(1/t)}{\log(1/t)}\le 0, \quad  0<t<1.$$
Hence making limits as $t \to \infty$ and $t \to 0$ one obtains (\ref{fo1}) and (\ref{f2}) respectively.
\end{proof}

\begin{remark} Let $m\in \MBR$ and $m(t\xi,t\eta)= m(\xi,\eta)$ for any $t>0$.
In the case $\underline \alpha_{\Phi_3}= \overline\alpha_{\Phi_3}$ one has$$ \underline\alpha _{\Phi_1}+\underline\alpha _{\Phi_2}\le \alpha_{\Phi_3}\le \overline\alpha _{\Phi_1}+\overline\alpha _{\Phi_2}$$
In the case $\underline \alpha_{\Phi_i}= \overline\alpha_{\Phi_i}$ for $i=1,2$ one has,
$$ \underline\alpha_{\Phi_3}\le \alpha _{\Phi_1}+\alpha _{\Phi_2}\le \overline\alpha _{\Phi_3}.$$

For Orlicz spaces where $\underline \alpha_{\Phi_i}= \overline\alpha_{\Phi_i}$ for $i=1,2,3$ the Bilinear Hilbert transform $m(\xi,\eta)= sign (\xi-\eta)$ can only belong to $\MBR$ whenever $\alpha_{\Phi_3}=\alpha_{\Phi_1}+\alpha_{\Phi_2}$.
\end{remark}

\section{Bilinear multipliers when $m(\xi,\eta)=M(\xi-\eta)$}

Let us restrict ourselves to a class of multipliers where
$m(\xi, \eta)=M(\xi-\eta)$ for some function $M$ defined in $\R$. As in the
introduction we use the notation $\tilde\M_{(\Phi_1,\Phi_2,\Phi_3)}(\R)$ for
the space of
 locally integrable functions $M:\R\to \C$  such that
 $m(\xi,\eta)=M(\xi-\eta)\in \MBR,$
 We keep the notation $\|M\|_{(\Phi_1,\Phi_2,\Phi_3)}= \|B_m\|.$

We recall several formulations for  $B_M$ (see \cite[Proposition 3.3]{B2}): Let $M\in L^1_{loc}(\R)$, $f, g\in {\mathcal P}(\R)$.
Then \be \label{exp1} B_M(f,g)(x)=\frac{1}{2}\int_{\R^2} \hat
f(\frac{u+v}{2})\hat g(\frac{u-v}{2})
 M(v)e^{2\pi i u x}du dv\ee

 \begin{equation}\label{f1}B_M(f,g)(-x)= \int_{\R}
(\widehat{\tau_x g}*M)(\xi) \widehat{\tau_x
f}(\xi)d\xi.\end{equation}

A basic characterization for integrable symbols is the following (see \cite[Proposition 3.4]{B2}): If $M\in L^1(\R)$ and $K=\check M$, where $\check M(\xi)=\hat M(-\xi)$,  and $f,g\in \mathcal P(\R)$ then
\be \label{form1} B_M(f,g)= \int_\R f(x-t)g(x+t)K(t)dt.
\ee

A first elementary example of multiplier in $\tilde\M_{(\Phi_1,\Phi_2,\Phi_3)}(\R)$ is giving  selecting $\alpha=1$ and $\beta=-1$ in Proposition \ref{meas} obtaining the following result (which follows from (\ref{holder}):

\begin{theorem} \label{mt1}
Let $\Phi_1$, $\Phi_2$ and $\Phi_3$ be Young functions such that
\[ \Phi_1^{-1}(x) \Phi_2^{-1}(x) \leq \Phi_3^{-1}(x), x\in \R. \]

If $\mu\in M(\R)$ and $M(\xi)=\hat{\mu}(\xi)$ then  $ M\in\tilde\M_{(\Phi_1,\Phi_2,\Phi_3)}(\R)$. Moreover
$$\|M\|_{(\Phi_1,\Phi_2,\Phi_3)}\le 2\|\mu\|_1.$$
\end{theorem}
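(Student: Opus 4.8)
The plan is to deduce Theorem~\ref{mt1} directly from Proposition~\ref{meas} by the substitution suggested in the text, namely choosing $(\alpha,\beta)=(1,-1)$. First I would observe that with these values, the symbol $m(\xi,\eta)=\hat\mu(\alpha\xi+\beta\eta)=\hat\mu(\xi-\eta)=M(\xi-\eta)$, which is exactly the form required for membership in $\tilde\M_{(\Phi_1,\Phi_2,\Phi_3)}(\R)$. The hypothesis $\Phi_1^{-1}(x)\Phi_2^{-1}(x)\le\Phi_3^{-1}(x)$ is precisely the hypothesis of Proposition~\ref{meas}, so that proposition applies verbatim and yields $m\in\MBR$ together with the bound $\|m\|_{(\Phi_1,\Phi_2,\Phi_3)}\le 2\|\mu\|_1$.

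The second step is purely a matter of unwinding notation: by definition $\|M\|_{(\Phi_1,\Phi_2,\Phi_3)}=\|B_m\|=\|m\|_{(\Phi_1,\Phi_2,\Phi_3)}$ where $m(\xi,\eta)=M(\xi-\eta)$, so the estimate $\|m\|_{(\Phi_1,\Phi_2,\Phi_3)}\le 2\|\mu\|_1$ is literally the desired inequality $\|M\|_{(\Phi_1,\Phi_2,\Phi_3)}\le 2\|\mu\|_1$, and the membership $m\in\MBR$ is by definition the membership $M\in\tilde\M_{(\Phi_1,\Phi_2,\Phi_3)}(\R)$. For completeness one may also note that the explicit action, read off from the computation in the proof of Proposition~\ref{meas} with $(\alpha,\beta)=(1,-1)$, is $B_M(f,g)(x)=\int_\R f(x-t)g(x+t)\,d\mu(t)$, which matches formula~(\ref{form1}) in the special case where $\mu$ has an $L^1$ density (there $K=\check M$).

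There is essentially no obstacle here: the whole content has already been extracted in Proposition~\ref{meas}, and this theorem is the advertised special case $\alpha=1$, $\beta=-1$. The only point demanding the slightest care is the bookkeeping between the two notations $\|\cdot\|_{(\Phi_1,\Phi_2,\Phi_3)}$ for bilinear multipliers $m(\xi,\eta)$ and for kernels $M(\xi-\eta)$; since they are defined to agree, this is immediate. I would therefore present the proof in two or three lines, invoking Proposition~\ref{meas} with $(\alpha,\beta)=(1,-1)$ and translating the conclusion through the definition of $\tilde\M_{(\Phi_1,\Phi_2,\Phi_3)}(\R)$.
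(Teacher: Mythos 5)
Your proposal is correct and coincides with the paper's own argument: the theorem is proved there exactly by applying Proposition~\ref{meas} with $\alpha=1$, $\beta=-1$, so that $m(\xi,\eta)=\hat\mu(\xi-\eta)=M(\xi-\eta)$ and the bound $\|M\|_{(\Phi_1,\Phi_2,\Phi_3)}\le 2\|\mu\|_1$ follows directly. Your notational unwinding of $\|M\|_{(\Phi_1,\Phi_2,\Phi_3)}=\|B_m\|$ is exactly the intended reading.
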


Another elementary case  is the following one.

\begin{theorem}\label{mt2}
Let $\Phi_1$, $\Phi_2$ and $\Phi_3$ be Young functions such that
\[ \Phi_1^{-1}(x) \Phi_2^{-1}(x) \leq x\Phi_3^{-1}(x), x\in \R. \]

If $M\in L^1(\R)$ then  $ M\in\tilde\M_{(\Phi_1,\Phi_2,\Phi_3)}(\R)$. Moreover
$$\|M\|_{(\Phi_1,\Phi_2,\Phi_3)}\le 2C_{\Phi_3}(2)\|M\|_1.$$
\end{theorem}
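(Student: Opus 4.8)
The plan is to use the convolution representation (\ref{form1}): since $M \in L^1(\R)$, its inverse Fourier transform $K = \check M$ is a bounded continuous function, and for $f,g \in \mathcal P(\R)$ one has
$$B_M(f,g)(x) = \int_\R f(x-t) g(x+t) K(t)\, dt.$$
The first step is to bound $\|K\|_\infty \le \|M\|_1$, which is the standard Fourier inversion estimate. Then I would take absolute values inside the integral to get the pointwise bound $|B_M(f,g)(x)| \le \|M\|_1 \int_\R |f(x-t)| |g(x+t)|\, dt$.

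The key observation is that the last integral is essentially a convolution evaluated at $2x$: setting $\tilde g(s) = g(-s)$ and substituting, $\int_\R |f(x-t)||g(x+t)|\,dt = (|f| \star |\tilde g|)(2x) = D_2(|f|\star|\tilde g|)(x)$. So I would write $|B_M(f,g)| \le \|M\|_1\, D_2\big(|f| \star |\tilde g|\big)$ pointwise, where $D_2 h(x) = h(2x)$. Applying $N_{\Phi_3}$, using that it is monotone, the dilation bound from Proposition \ref{p0} and the definition of $C_{\Phi_3}$, one gets
$$N_{\Phi_3}(B_M(f,g)) \le \|M\|_1\, C_{\Phi_3}(2)\, N_{\Phi_3}\big(|f|\star|\tilde g|\big).$$
Now I would invoke Young's inequality for convolutions in Orlicz spaces (\ref{young}), which applies precisely because the hypothesis $\Phi_1^{-1}(x)\Phi_2^{-1}(x) \le x\Phi_3^{-1}(x)$ is the condition (\ref{yi}) needed there; this yields $N_{\Phi_3}(|f|\star|\tilde g|) \le 2 N_{\Phi_1}(|f|) N_{\Phi_2}(|\tilde g|) = 2 N_{\Phi_1}(f) N_{\Phi_2}(g)$, using translation/reflection invariance of the Orlicz norm. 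Combining these gives $N_{\Phi_3}(B_M(f,g)) \le 2 C_{\Phi_3}(2) \|M\|_1 N_{\Phi_1}(f) N_{\Phi_2}(g)$, which is the claimed estimate, and hence $M \in \tilde\M_{(\Phi_1,\Phi_2,\Phi_3)}(\R)$.

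There is no serious obstacle here; the only point requiring a little care is the change of variables that turns $\int_\R |f(x-t)||g(x+t)|\,dt$ into a dilated convolution, and making sure the factor $C_{\Phi_3}(2)$ (rather than $C_{\Phi_3}(1/2)$) appears — this comes from $D_2 = D_{1/t}$ with $t = 1/2$, so one picks up $C_{\Phi_3}(2) = \|D_2\|$. Everything else is a direct concatenation of the Orlicz-space inequalities already recorded in Section 2 together with the representation formula (\ref{form1}) quoted from \cite{B2}.
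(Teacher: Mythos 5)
Your argument is correct and gives exactly the paper's constant $2C_{\Phi_3}(2)\|M\|_1$, but it follows a different decomposition than the paper's proof. The paper does not invoke the kernel formula (\ref{form1}); it changes variables on the frequency side ($\beta=\xi-\eta$, $\gamma=\xi$) to write $B_M(f,g)(x)=\int_\R (f\ast M_\beta g)(2x)\,M(\beta)e^{-2\pi i\beta x}\,d\beta$, then applies Minkowski's integral inequality in $L^{\Phi_3}$, pulls out $C_{\Phi_3}(2)$ from the evaluation at $2x$, and uses Young's inequality (\ref{young}) for each modulated convolution $f\ast M_\beta g$ (with $N_{\Phi_2}(M_\beta g)=N_{\Phi_2}(g)$), integrating $|M(\beta)|\,d\beta$ to produce $\|M\|_1$. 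You instead work on the space side: dominate the kernel by $\|\hat M\|_\infty\le\|M\|_1$, reduce to the single convolution $(|f|\star|g|)(2x)$, and apply the dilation bound and Young's inequality once. (Two cosmetic points: the substitution $s=x+t$ gives directly $\int_\R|f(x-t)|\,|g(x+t)|\,dt=(|f|\star|g|)(2x)$, so the reflection $\tilde g$ is unnecessary -- your identity with $\tilde g$ is actually off by a reflection -- but since the Luxemburg norm is invariant under $x\mapsto -x$ this changes nothing; and $C_{\Phi_3}(2)$ is simply $\|D_2\|_{L^{\Phi_3}\to L^{\Phi_3}}$, no detour through $D_{1/t}$ is needed.) What each route buys: yours is more elementary (a pointwise domination plus one application of Young, no vector-valued Minkowski), at the cost of discarding all oscillation by replacing $K$ with $\|\hat M\|_\infty$ and $f,g$ with $|f|,|g|$; the paper's version keeps $M(\beta)$ and the modulations explicit, which is the pattern of computation reused elsewhere in the paper (compare Proposition \ref{meas} and Proposition \ref{convo}). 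Both proofs ultimately rest on the same two ingredients: Young's convolution inequality under the hypothesis $\Phi_1^{-1}(x)\Phi_2^{-1}(x)\le x\Phi_3^{-1}(x)$, and the dilation constant $C_{\Phi_3}(2)$ arising from the argument $2x$.
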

\begin{proof}
Making the change of variable $\beta=\xi-\eta$ and $\gamma= \xi$
\begin{eqnarray*}
 B_M(f,g)(x) & = & \int_{\R}\int_{\R} \hat{f}(\xi) \hat{g}(\eta) M(\xi-\eta) e^{2\pi\textit{i}(\xi+\eta)x} d\xi d\eta  \\
 & = & \int_{\R}\int_{\R} \hat{f}(\gamma) \hat{g}(\gamma-\beta) M(\beta) e^{2\pi\textit{i}(2\gamma-\beta)x} d\gamma d\beta\\
		    & = & \int_{\R} \left(\int_{\R} \widehat{(f\ast M_\beta g)}(\gamma)e^{4\pi i \gamma x}d\gamma\right) M(\beta)e^{-2\pi i \beta x}d\beta \\
	  & = & \int_{\R} \left( f\ast M_\beta g\right)(2x) M(\beta)e^{-2\pi i \beta x}d\beta
\end{eqnarray*}

Then by taking norm of this expression in $L^{\Phi_3}(\R)$, and using (\ref{young}), we obtain
\[ N_{\Phi_3}(B_M(f,g)) \leq C_{\Phi_3}(2)\int_{\R}  N_{\Phi_3}(f\ast M_\beta g) \vert M(\beta)\vert d\beta \leq  2C_{\Phi_3}(2) N_{\Phi_1}(f) N_{\Phi_2}(g) \Vert M\Vert_1 .\]
The proof is then complete.
\end{proof}

Remark that, if we consider the complementary pair of Young functions $(\Phi,\Psi)$, then we could also obtain the following new result as a corollary of Theorem \ref{mt2}.

\begin{corollary}
Let $(\Phi,\Psi)$ be a complementary pair of Young function. If $M\in L^1(\R)$ then  $ M\in\tilde\M_{(\Phi,\Psi,\infty)}(\R)$. Moreover $\|M\|_{(\Phi,\Psi,\infty)}\le 2\|M\|_1.$
\end{corollary}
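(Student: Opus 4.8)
The plan is to deduce this corollary directly from Theorem \ref{mt2} by specializing the triple $(\Phi_1,\Phi_2,\Phi_3)$ to $(\Phi,\Psi,\Phi_3)$ where $\Phi_3$ is chosen so that $L^{\Phi_3}(\R)=L^\infty(\R)$. First I would recall that the constant function $\Phi_3\equiv\infty$ on $(0,\infty)$ (with $\Phi_3(0)=0$) is the Young function whose Orlicz space is $L^\infty(\R)$, with $N_{\Phi_3}(f)=\|f\|_\infty$; its generalized inverse is $\Phi_3^{-1}(x)=0$ for every finite $x\ge 0$ (and more precisely $\Phi_3^{-1}$ is identically $0$ on $[0,\infty)$ except possibly a jump at $0$, which is irrelevant).

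Next I would verify the hypothesis \eqref{yi} of Theorem \ref{mt2} for this triple, namely $\Phi^{-1}(x)\Psi^{-1}(x)\le x\,\Phi_3^{-1}(x)$ for all $x\ge 0$. Here the subtlety is that the right-hand side is $0$, so the inequality would fail unless the left-hand side is also $0$; but $\Phi^{-1}(x)\Psi^{-1}(x)\le 2|x|$ is finite, not zero. The hard part — really the only point requiring care — is therefore that one cannot literally take $\Phi_3^{-1}=0$. The clean fix is to observe that Theorem \ref{mt2}'s proof only used the convolution Young inequality \eqref{young} with the triple $(\Phi,\Psi,\Phi_3)$, and the relevant convolution inequality $N_\infty(f\star g)\le 2N_\Phi(f)N_\Psi(g)$ for $f\in L^\Phi$, $g\in L^\Psi$ is exactly the Hölder-type estimate \eqref{holder}/\eqref{comp} applied to $f$ and $\tau_{-x}\tilde g$ pointwise: indeed $|f\star g(x)|\le\int|f(y)||g(x-y)|dy\le 2N_\Phi(f)N_\Psi(g)$ by Young's inequality \eqref{yi} together with \eqref{comp}. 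So I would either (a) note that \eqref{young} remains valid in the limiting case $\Phi_3=L^\infty$ with constant $2$, since $L^1\star L^\infty\subset L^\infty$ with $\|f\star g\|_\infty\le\|f\|_1\|g\|_\infty$ and the Orlicz Hölder inequality gives $\|f\|_1\le$ (finite) — more directly, $\|f\star g\|_\infty\le 2N_\Phi(f)N_\Psi(g)$ follows from \eqref{comp} and \eqref{yi} — or (b) run the computation of Theorem \ref{mt2} verbatim with $N_{\Phi_3}$ replaced by $\|\cdot\|_\infty$.

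Concretely, I would reproduce the identity from the proof of Theorem \ref{mt2},
\[
B_M(f,g)(x)=\int_\R (f\star M_\beta g)(2x)\,M(\beta)e^{-2\pi i\beta x}\,d\beta,
\]
valid for $f,g\in\Pol(\R)$, then take the $L^\infty$ norm in $x$ and use Minkowski's integral inequality together with $\|f\star M_\beta g\|_\infty\le 2N_\Phi(f)N_\Psi(g)$ (translation/modulation invariance of $N_\Psi$ plus the pointwise bound above). Since dilation is an isometry on $L^\infty$, i.e. $C_{\Phi_3}(2)=1$ here, this yields $\|B_M(f,g)\|_\infty\le 2\|M\|_1 N_\Phi(f)N_\Psi(g)$, which is exactly $\|M\|_{(\Phi,\Psi,\infty)}\le 2\|M\|_1$. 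I would then remark that this is consistent with setting $C_{\Phi_3}(2)=1$ in the statement of Theorem \ref{mt2}, so no genuinely new argument is needed beyond identifying $L^\infty$ as the relevant endpoint Orlicz space and checking that the convolution estimate degenerates correctly. The main obstacle, to repeat, is purely bookkeeping about the degenerate Young function $\Phi_3$ and its inverse; once one agrees that \eqref{young} holds with constant $2$ in the $L^1\star L^\Psi\hookrightarrow L^\infty$ endpoint, the corollary is immediate.
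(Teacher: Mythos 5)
Your argument is correct, but it takes a different (and slightly longer) route than the paper. The paper does not touch the degenerate Young function $\Phi_3\equiv\infty$ at all: it chooses $\Phi_3(x)=0$ for $|x|\le 2$ and $\Phi_3(x)=\infty$ for $|x|>2$, so that $L^{\Phi_3}(\R)=L^\infty(\R)$ (with $N_{\Phi_3}=\tfrac12\|\cdot\|_\infty$), $C_{\Phi_3}(2)=1$, and, crucially, $\Phi_3^{-1}\equiv 2$; with this choice the hypothesis $\Phi^{-1}(x)\Psi^{-1}(x)\le x\,\Phi_3^{-1}(x)=2x$ of Theorem \ref{mt2} is exactly the complementary-pair inequality \eqref{comp}, so the corollary is a literal two-line specialization of Theorem \ref{mt2}. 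The obstacle you flag --- that $\Phi_3^{-1}=0$ kills the hypothesis --- is real for your choice of representative, but it is an artifact of that choice: $L^\infty$ is realized by several Young functions, and picking one whose generalized inverse is the constant $2$ removes the problem entirely. Your fix, re-running the proof of Theorem \ref{mt2} with $\|\cdot\|_\infty$ in place of $N_{\Phi_3}$ and replacing \eqref{young} by the endpoint estimate $\|f\star M_\beta g\|_\infty\le 2N_\Phi(f)N_\Psi(g)$ (Orlicz--H\"older from \eqref{comp}/\eqref{holder} plus translation, reflection and modulation invariance of the Luxemburg norm), is perfectly sound and even gives the bound $2\|M\|_1$ for the genuine sup norm rather than for $N_{\Phi_3}=\tfrac12\|\cdot\|_\infty$; what it costs is repeating the computation of Theorem \ref{mt2}, whereas the paper's choice of $\Phi_3$ lets it quote that theorem as a black box.
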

\begin{proof}
We take in  Theorem \ref{mt2} the functions $\Phi_1=\Phi$, $\Phi_2=\Psi$ and $\Phi_3$ is such a way that $\Phi_3^{-1}=2$, that is to say $\Phi_3(x)=0$ for $ |x|\leq 2$ and $\Phi_3(x)=\infty$ for $|x|>2$. Then the proof is complete since $L^{\Phi_3}(\R)=L^\infty(\R)$ and the complementary pair of Young functions satisfy the inequality \eqref{comp}.
\end{proof}

As in the previous section we can  generate  new multipliers in $\mul$ using the following methods and the previous examples. The proof follows the same ideas as in \cite{B2} and Proposition \ref{convo} and it is left to the reader.

\begin{proposition}  \label{propi2} Let $\phi\in L^1(\R)$ and $M\in
\mul$. Then

\begin{enumerate}[\sf \ \ \ (a)]
\item
$\phi*M\in \mul$ and $\|\phi*M\|_{p_1,p_2,p_3}\le \|\phi\|_1
\|M\|_{(\Phi_1,\Phi_2,\Phi_3)}.$
\item   $\hat\phi M\in \mul$ and
 $\|\hat\phi M\|_{(\Phi_1,\Phi_2,\Phi_3)}\le \|\phi\|_1
\|M\|_{(\Phi_1,\Phi_2,\Phi_3)}.$

\item If $\psi\in L^1(\R^+, W)$  then $M_\psi(\xi)=\int_0^\infty M(t\xi)\psi(t)dt \in \mul$.
Moreover $\|M_\psi\|_{(\Phi_1,\Phi_2,\Phi_3)}\le \|\psi\|_{L^1(\R^+, W)}
\|M\|_{(\Phi_1,\Phi_2,\Phi_3)}.$
\end{enumerate}

\end{proposition}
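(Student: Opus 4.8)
The plan is to mirror the three-part structure of Proposition~\ref{convo}, specializing each argument to symbols of the form $M(\xi-\eta)$, and to reduce everything to the already established invariance properties of $\tilde\M_{(\Phi_1,\Phi_2,\Phi_3)}(\R)$. The first observation needed is that the operations $\phi\ast(\cdot)$, $\hat\phi\,(\cdot)$ and the dilation-averaging all respect the special form of the symbol: if $m(\xi,\eta)=M(\xi-\eta)$ then $\varphi\ast m$, $\hat\varphi\, m$ and $m_\psi$ arise from multipliers $m_1(\xi,\eta)=M_1(\xi-\eta)$ for suitable $M_1$. Concretely, for part (a) one takes $\varphi(u,v)=\phi(u)\delta_0(v)$ (or, to stay within $L^1(\R^2)$, one argues directly): writing $M_1=\phi\ast M$ one checks that $M_1(\xi-\eta)=\int_\R M(\xi-\eta-s)\phi(s)\,ds = (\varphi\ast m)(\xi,\eta)$ with the distributional $\varphi$, and then invokes translation invariance, Proposition~\ref{1}(b), together with the vector-valued Minkowski inequality exactly as in the proof of Proposition~\ref{convo}(a). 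Thus $\|\phi\ast M\|_{(\Phi_1,\Phi_2,\Phi_3)}\le\|\phi\|_1\|M\|_{(\Phi_1,\Phi_2,\Phi_3)}$.

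For part (b) the point is that $\widehat{\phi}(\xi-\eta)\,M(\xi-\eta)$ is again a function of $\xi-\eta$, namely $M_1=\widehat\phi\,M$; using $\widehat\phi(\xi-\eta)=\int_\R \phi(s)e^{-2\pi i s(\xi-\eta)}\,ds$ one writes, for $f,g\in\mathcal P(\R)$,
\[
B_{\widehat\phi M}(f,g)(x)=\int_\R B_{M_{(-s,s)}m}(f,g)(x)\,\phi(s)\,ds
=\int_\R B_m(\tau_s f,\tau_{-s}g)(x)\,\phi(s)\,ds,
\]
where the inner identity is Proposition~\ref{1}(c) with $(\xi_0,\eta_0)=(-s,s)$. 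Taking $N_{\Phi_3}$ norms, applying Minkowski's integral inequality and then translation invariance of the Orlicz norm gives the bound $\|\widehat\phi\, M\|_{(\Phi_1,\Phi_2,\Phi_3)}\le\|\phi\|_1\|M\|_{(\Phi_1,\Phi_2,\Phi_3)}$. For part (c) one notes $D_t m(\xi,\eta)=M(t\xi-t\eta)=(D_tM)(\xi-\eta)$, so $M_\psi(\xi)=\int_0^\infty M(t\xi)\psi(t)\,dt$ corresponds to the symbol $m_\psi(\xi,\eta)=\int_0^\infty D_tm(\xi,\eta)\psi(t)\,dt$; one then uses the identity
\[
B_{M_\psi}(f,g)(x)=\int_0^\infty B_{D_tM}(f,g)(x)\,\psi(t)\,dt,
\]
Minkowski's inequality and the dilation estimate of Proposition~\ref{l2}, namely $\|B_{D_tM}\|\le W(t)\|M\|_{(\Phi_1,\Phi_2,\Phi_3)}$, to conclude $\|M_\psi\|_{(\Phi_1,\Phi_2,\Phi_3)}\le\|\psi\|_{L^1(\R^+,W)}\|M\|_{(\Phi_1,\Phi_2,\Phi_3)}$.

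The routine part is the Fubini-type interchange of integrals used to rewrite each $B$-operator as an average of translated/dilated copies, together with the density of $\mathcal P(\R)$ in the $\Delta_2$ case when one wants the conclusion phrased for $L^{\Phi_1}\times L^{\Phi_2}$ rather than just on $\mathcal P(\R)$. The main obstacle I anticipate is purely bookkeeping: ensuring that the one-variable convolution/modulation on $M$ really matches the two-variable operation on $m$ in Proposition~\ref{convo}, i.e.\ correctly tracking that $\varphi$ effectively lives on the anti-diagonal and producing an honest $L^1(\R^2)$ (or measure) representation; once that identification is in place, the norm estimates follow verbatim from Section~3. Since the paper explicitly says ``the proof follows the same ideas as in \cite{B2} and Proposition~\ref{convo} and it is left to the reader,'' I would in fact only sketch these identifications and cite Proposition~\ref{convo} and Proposition~\ref{l2} for the estimates.
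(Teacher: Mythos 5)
Your proposal is correct and follows precisely the route the paper intends: the paper leaves this proof to the reader as a specialization of Proposition~\ref{convo}, and your identifications $(\phi\ast M)(\xi-\eta)=\int_\R \tau_{(s,0)}m(\xi,\eta)\,\phi(s)\,ds$, $\hat\phi(\xi-\eta)m(\xi,\eta)=\int_\R M_{(-s,s)}m(\xi,\eta)\,\phi(s)\,ds$ and $D_tm(\xi,\eta)=(D_tM)(\xi-\eta)$ are exactly the bookkeeping needed, after which Propositions~\ref{1} and~\ref{l2} together with Minkowski's integral inequality give the stated bounds. No gaps worth flagging.
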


\section{On necessary conditions for $\mul\ne \{0\}$}
Let  us show that the classes $\mul$ are reduced to $\{0\}$ in certain cases. We shall use arguments from \cite[Theorem 3.7, Theorem 3.9]{B2} and  \cite[Theorem 5.10]{R}.

We need the following lemma to give a result about the bilinear multipliers in the class $M_{(\Phi_1,\Phi_2,\Phi_3)}$.

\begin{lemma} \label{lemma normPhi geq norm1 }
Let $g$ be a continuous function in $\R$ with $supp(g)\subset [0,a]$ for some $a>0$ and let $\Phi$ be a Young function. Then
\[ N_\Phi(\Sigma_{k=0}^N \epsilon_k \tau_{[a+1]k}g) \geq \frac{1}{a\Phi^{-1}(\frac{1}{a(N+1)})}\Vert g\Vert_1 \]
where $\epsilon_k \in \{\pm 1\}$.
\end{lemma}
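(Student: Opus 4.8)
The key observation is that the functions $\tau_{[a+1]k}g$ for $k=0,1,\dots,N$ have pairwise disjoint supports, since $\mathrm{supp}(\tau_{[a+1]k}g)\subset [(a+1)k, (a+1)k+a]$ and consecutive intervals are separated by a gap of length $1$. Hence on the set $A=\bigcup_{k=0}^N [(a+1)k,(a+1)k+a]$, which has measure $|A|=a(N+1)$, the function $h=\sum_{k=0}^N \epsilon_k \tau_{[a+1]k}g$ satisfies $|h(x)| = |g(x-(a+1)k)|$ on the $k$-th piece, so in particular $\|h\|_1 = (N+1)\|g\|_1$.

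\textbf{Main step.} I would apply the lower bound from Lemma \ref{l1} to $h$ on the set $A$. Lemma \ref{l1} gives, for a bounded function supported on a set of finite positive measure,
\[ N_\Phi(h) \geq \frac{\|h\|_1}{|A|\,\Phi^{-1}(|A|^{-1})}. \]
(Note $h$ is bounded since $g$ is continuous with compact support.) Substituting $|A| = a(N+1)$ and $\|h\|_1 = (N+1)\|g\|_1$ yields
\[ N_\Phi(h) \geq \frac{(N+1)\|g\|_1}{a(N+1)\,\Phi^{-1}\big(\tfrac{1}{a(N+1)}\big)} = \frac{\|g\|_1}{a\,\Phi^{-1}\big(\tfrac{1}{a(N+1)}\big)}, \]
which is exactly the claimed inequality.

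\textbf{Remarks on obstacles.} There is essentially no obstacle here; the proof is a direct application of the already-established Lemma \ref{l1}, the only real content being the disjointness-of-supports bookkeeping that makes $\|h\|_1 = (N+1)\|g\|_1$ and $|A| = a(N+1)$. One minor point to be careful about is the placement of the supports: with translation step $[a+1]$ (the integer part of $a+1$, hence at least $a+1$ when $a>0$... actually $[a+1]\ge \lceil a\rceil$ is not automatic, but since $g$ is supported in $[0,a]$ and $[a+1] \ge a+1 > a$ when $a$ is not an integer, and $[a+1]=a+1$ when $a\in\N$; in all cases $[a+1]\ge a$, giving disjoint supports up to boundary overlap of measure zero, which does not affect the $L^1$ norm or the measure computation). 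The $\epsilon_k$ signs play no role in $|h|$ on the disjoint pieces, so the bound is uniform over all sign choices, as stated.
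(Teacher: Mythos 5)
Your proof is correct and takes essentially the same route as the paper: both arguments rest on the disjointness of the supports of the $\tau_{[a+1]k}g$ and a Jensen-type lower bound. The only cosmetic difference is that you apply the lower estimate of Lemma \ref{l1} directly to $h=\sum_{k=0}^N\epsilon_k\tau_{[a+1]k}g$ on the union $A$ with $|A|=a(N+1)$ and $\Vert h\Vert_1=(N+1)\Vert g\Vert_1$, whereas the paper first reduces by translation invariance to the single interval $I_0=[0,a]$ and then repeats the Jensen argument there, giving the identical bound.
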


\begin{proof}
Note that $supp(\tau_yg) \subset [y,y+a]$ and then $\tau_{[a+1]k}g$ are disjointly supported. Hence if $h = \Sigma_{k=0}^N \epsilon_k \tau_{[a+1]k}g$, $I_0=[0,a]$ and $I_k=[[a+1]k,[a+1]k+a]$ then, using Jensen's inequality, one has

\begin{eqnarray*}
N_\Phi(h) &=& \inf\{ \lambda>0 : \int_\R \Phi(\frac{\vert h(x)\vert}{\lambda})dx \leq 1\}  \\
 &=& \inf\{\lambda>0: \Sigma_{k=0}^N \int_{I_k} \Phi(\frac{\vert \tau_{[a+1]k}g(x)\vert}{\lambda})dx\leq 1\}\\
 &=& \inf\{\lambda>0: \frac{1}{|I_0|} \int_{I_0} \Phi(\frac{\vert g(x)\vert}{\lambda})dx\leq \frac{1}{a(N+1)}\}\\
 &\geq & \inf\{\lambda>0: \Phi(\frac{1}{a} \int_{I_0} \frac{\vert g(x)\vert}{\lambda})dx\leq \frac{1}{a(N+1)}\}\\
&=& \frac{1}{a\Phi^{-1}(\frac{1}{a(N+1)})} \Vert g\Vert_1
\end{eqnarray*}
where the last equality follows same  argument as in the proof of Lemma \ref{l1}.
\end{proof}

\begin{theorem} \label{necessary cond1}
Let $\Phi_1,\Phi_2, \Phi_3$ be Young functions.

(i) If
$$\sup_{x\in \R} \frac{\Phi_1^{-1}\left( x\right) \Phi_2^{-1}\left( x\right)} {\Phi_3^{-1}\left( x\right)}<\infty$$
then $\mul\ne \{0\}$.

(ii) If  $\tilde{\mathcal M}_{(\Phi_1, \Phi_2, \Phi_3)}(\R) \ne \{0\}$ then for all $a>0$ one has
\[ \sup_{N\geq 1} \frac{\Phi_1^{-1}\left( \frac{1}{Na}\right) \Phi_2^{-1}\left( \frac{1}{Na}\right)} {\Phi_3^{-1}\left( \frac{1}{Na}\right)} <\infty. \]
\end{theorem}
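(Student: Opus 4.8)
For part (i), the plan is to build an explicit nonzero multiplier. The hypothesis $\sup_x \Phi_1^{-1}(x)\Phi_2^{-1}(x)/\Phi_3^{-1}(x)<\infty$ is exactly the statement that there is a constant $c>0$ with $\Phi_1^{-1}(x)\Phi_2^{-1}(x)\le c\,\Phi_3^{-1}(x)$ for all $x\ge 0$. Replacing $\Phi_1$ by the equivalent Young function $x\mapsto\Phi_1(x/c)$ (which changes the norm only by a constant and leaves the Orlicz space unchanged) we may assume $c=1$, i.e. the H\"older-type condition \eqref{hi} holds for the triple $(\Phi_1,\Phi_2,\Phi_3)$. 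Now I would simply invoke Theorem \ref{mt1}: taking $\mu=\delta_0$, the Dirac mass at $0$, gives $M(\xi)=\hat\mu(\xi)\equiv 1$, which by Theorem \ref{mt1} lies in $\tilde{\mathcal M}_{(\Phi_1,\Phi_2,\Phi_3)}(\R)$ with $\|M\|\le 2$. Since the constant function $1$ is a nonzero multiplier (for instance $B_1(f,g)=fg$), this shows $\mul\neq\{0\}$.

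For part (ii), the strategy is the contrapositive-style quantitative argument from \cite[Theorem 3.7]{B2}: assume $M\in\mul$ is nonzero and extract the stated supremum bound by testing $B_M$ on carefully chosen pairs $(f,g)$. Fix $a>0$. Since $M\neq 0$ one can find a small interval on which a suitable average of $M$ is bounded below; more precisely, using formula \eqref{form1} (or \eqref{f1}), one picks $f,g\in\mathcal P(\R)$ supported so that $B_M(f,g)$ is a fixed continuous bump, and then one replaces $f$ and $g$ by sums of translates. Concretely, the key step is to choose $f,g$ and a continuous bump $h$ with $\mathrm{supp}(h)\subset[0,a]$ such that for all $N$,
\[ B_M\Bigl(\textstyle\sum_{k=0}^N \tau_{b k}f,\ \sum_{k=0}^N \tau_{b k}g\Bigr) \]
contains (up to controllable error) the block $\sum_{k=0}^N \epsilon_k\tau_{(a+1)k}h$ with the translates disjointly supported, after a suitable change of variables exploiting that $M(\xi-\eta)$ depends only on $\xi-\eta$ (translating $f$ and $g$ by the same amount translates $B_M(f,g)$). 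Then one applies Lemma \ref{lemma normPhi geq norm1 } to bound $N_{\Phi_3}$ of the output from below by $\frac{1}{a\Phi_3^{-1}(1/(a(N+1)))}\|h\|_1$, while the inputs have norms $N_{\Phi_1}(\sum\tau_{bk}f)$ and $N_{\Phi_2}(\sum\tau_{bk}g)$, which by the same disjoint-support computation (the "if $|f|=1$ on $A$" part of Lemma \ref{l1}, applied to the building blocks) are comparable to $\frac{1}{\Phi_1^{-1}(1/(c_1(N+1)))}$ and $\frac{1}{\Phi_2^{-1}(1/(c_2(N+1)))}$ respectively. Feeding these three estimates into $N_{\Phi_3}(B_M(\cdot,\cdot))\le \|M\|\,N_{\Phi_1}(\cdot)N_{\Phi_2}(\cdot)$ and rearranging yields
\[ \frac{\Phi_1^{-1}\!\left(\tfrac{1}{c_1(N+1)}\right)\Phi_2^{-1}\!\left(\tfrac{1}{c_2(N+1)}\right)}{\Phi_3^{-1}\!\left(\tfrac{1}{a(N+1)}\right)} \le C\,\|M\|_{(\Phi_1,\Phi_2,\Phi_3)} \]
uniformly in $N$; absorbing the mismatched constants $c_1,c_2,a$ by monotonicity of the $\Phi_i^{-1}$ (and re-scaling $a$) gives exactly the claimed uniform bound over $N\ge 1$, for every $a>0$.

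The main obstacle, and the place requiring genuine care rather than bookkeeping, is the second step of part (ii): producing translates of $f$ and $g$ whose bilinear image under $B_M$ is an honest sum of disjointly supported bumps with prescribed signs $\epsilon_k$, with the cross terms $B_M(\tau_{bk}f,\tau_{bj}g)$ for $k\neq j$ either vanishing or being negligibly small. Arranging the disjointness exploits finite propagation: if $K=\check M$ were compactly supported this is immediate from \eqref{form1}, and in general one approximates $M$ by $\hat\phi M$ with $\phi$ chosen so that $\widehat{\hat\phi M}{}^{\vee}$ is essentially compactly supported (Proposition \ref{propi2}(b)), keeping the multiplier norm controlled. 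Choosing the spacing $b$ large relative to this propagation length makes all cross terms disappear, and then the lower bound via Lemma \ref{lemma normPhi geq norm1 } goes through; the freedom in the signs $\epsilon_k$ is what Lemma \ref{lemma normPhi geq norm1 } was tailored to handle, and one picks them to be the signs of $B_M(f,g)$ on each block so no cancellation occurs.
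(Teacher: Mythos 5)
Your part (i) is the paper's own argument (the paper simply cites Theorem \ref{mt1}); only note that your normalization goes the wrong way: replacing $\Phi_1$ by $x\mapsto\Phi_1(x/c)$ multiplies $\Phi_1^{-1}$ by $c$ and worsens the inequality --- you want $\Phi_1(cx)$, or simply carry the constant $c$ through the H\"older inequality \eqref{holder}, which only changes the bound to $2c\|\mu\|_1$. This is cosmetic.

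For part (ii) your skeleton coincides with the paper's (test $B_M$ on sums of translates of $\chi_{[0,a]}$, bound the output from below by Lemma \ref{lemma normPhi geq norm1 }, compute the input norms by Lemma \ref{l1}, divide), but the step you yourself single out as the main obstacle --- the cross terms $B_M(\tau_{bk}f,\tau_{bj}g)$, $k\neq j$ --- is exactly where your mechanism fails. Proposition \ref{propi2}(b) does not do what you claim: the kernel of $\hat\phi\,M$ is $\phi\ast\check M$, a convolution, so multiplying the symbol by $\hat\phi$ spreads rather than localizes the kernel; moreover, after the reduction to $M\in L^1$ the kernel $\check M$ is merely a $C_0$ function with no quantitative decay, so there is no way to certify that the roughly $N^2$ cross terms are ``negligibly small'' uniformly in $N$. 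Choosing the signs $\epsilon_k$ cannot rescue this either: they multiply the inputs, the diagonal terms carry $\epsilon_k^2=1$, and disjointly supported blocks cannot cancel in any case. The paper disposes of the cross terms by a device you are missing: averaging over the Rademacher parameter. By bilinearity and orthonormality, $\int_0^1 B_M\bigl(\sum_k r_k(t)\tau_{ky}f,\sum_k r_k(t)\tau_{ky}g\bigr)\,dt=\sum_k\tau_{ky}B_M(f,g)$, and Minkowski's inequality on the left then yields the needed upper bound with no support or decay information on $\check M$ whatsoever. Alternatively, your finite-propagation idea can be repaired by invoking Proposition \ref{propi2}(a) instead of (b): choose $\phi\in L^1$ with $\hat\phi(-\cdot)=\psi\in C_c^\infty$, so that the kernel of $\phi\ast M$ is exactly $\psi\cdot\check M$, genuinely compactly supported and nonzero for a suitable $\psi$; with spacing $b$ exceeding $a$ plus the diameter of $\mathrm{supp}\,\psi$, every cross term vanishes identically and your estimates go through, the signs $\epsilon_k$ then being superfluous. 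In either version you must also arrange that the output bump $B_M(f,g)$ is not identically zero for $f,g$ supported in $[0,a]$ (e.g. $f=g=\chi_{[0,a]}$ fails when the kernel is odd) --- a point the paper itself glosses over, but which the freedom to vary $f,g$, or a doubling argument in $a$, handles.
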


\begin{proof}
(i) follows from Theorem \ref{mt1}.

(ii) Let $0\ne M \in \tilde{M}_{(\Phi_1,\Phi_2,\Phi_3)}(\R)$. Using Proposition \ref{propi2}  we may assume that there exists $0\ne M \in L^1(\R)\cap \tilde{M}_{(\Phi_1,\Phi_2,\Phi_3)}(\R)$. Hence, from (\ref{form1}) one has that
\[ B_M(f,g)(x) = \int_{[x-a,x]\cap [-x,-x+a]} f(x-t) g(x+t) \hat{M}(-t) dt \]
for any $f$ and $g$ continuous functions compactly supported in $[0,a]$.
Consider the Rademacher system in $[0,1]$ and observe that for each $N$ and $y \in \R$, the orthonormality of the system gives
\[ \int_{0}^1 B_M \left( \Sigma_{k=0}^N r_k(t)\tau_{ky}f , \Sigma_{k=0}^N r_k(t)\tau_{ky}g  \right) dt = \Sigma_{k=0}^N B_M(\tau_{ky}f, \tau_{ky}g)  \]
Therefore, since $B_M(\tau_{ky}f, \tau_{ky}g) = \tau_{ky} B_M(f,g)$, we have
\begin{equation} \label{radem}
 \int_{0}^1 B_M \left( \Sigma_{k=0}^N r_k(t)\tau_{ky}f , \Sigma_{k=0}^N r_k(t)\tau_{ky}g  \right) dt = \Sigma_{k=0}^N \tau_{ky} B_M(f,g)
\end{equation}
for any $f,g$ compactly supported in $[0,a]$.
Now, let us consider the functions $f=g=\chi_{[0,a]}$, where $a>0$ is arbitrary constant, $y=[a+1]$ where $[\cdot]$ is the integer part. For each $N\in \N$ and $t\in [0,1]$ we denote
\[  f^t_N(x) = \Sigma_{k=1}^N r_k(t)\chi_{[[a+1]k,[a+1]k+a]}(x) .\]

Then for the functions $f$ and $f_N$, by using \eqref{radem} we have
\[ \int_{0}^1 B_M \left( f^t_N, f^t_N \right) dt =  \Sigma_{k=0}^N \tau_{[a+1]k} B_M(f,f) \]
where $supp B_M(f,f) \subset [0,2a]$.

By taking norm of the right hand side of this equality in $L^{\Phi_3}(\R)$ and using the Lemma \ref{lemma normPhi geq norm1 } we observe that

\begin{equation}\label{Rademacher_norm1}
N_{\Phi_3}\left(\Sigma_{k=0}^N \tau_{[a+1]k} B_M(f,f)\right) \geq \frac{\Vert B_M(\chi_{[0,a]},\chi_{[0,a]}) \Vert_1}{a\Phi_3^{-1}(\frac{1}{(N+1)a})}
\end{equation}

On the other hand, by using Minkowski's inequality and Lemma \ref{l1} we have

\begin{eqnarray*}\label{Rademacher_norm2}
 N_{\Phi_3}(\int_{0}^1 B_M \left( f^t_N, f^t_N \right) dt )
   &\leq&  \int_{0}^1 N_{\Phi_3}(B_M(f^t_N,f^t_N)) dt  \\
   &\leq&  \int_{0}^1  \Vert B_M\Vert N_{\Phi_1}(f^t_N) N_{\Phi_2}(f^t_N)) dt  \\
   &=&   \Vert B_M\Vert \frac{1}{\Phi_1^{-1}(\frac{1}{Na})} \frac{1}{\Phi_2^{-1}(\frac{1}{Na})}
\end{eqnarray*}
which  combining with \eqref{Rademacher_norm1} gives, for each $a>0$ and for all $N\in \N$,
$$
\frac{1}{a\Phi_3^{-1}(\frac{1}{Na})} \Vert B_M(f,f) \Vert_1 \leq    \Vert B_M\Vert \frac{1}{\Phi_1^{-1}(\frac{1}{Na})} \frac{1}{\Phi_2^{-1}(\frac{1}{Na})}
.$$
 This implies that for any $a>0$ there exists $C = C_a > 0$ such that
\[
\sup_N\frac{\Phi_1^{-1}(\frac{1}{Na})  \Phi_2^{-1}(\frac{1}{Na})}{\Phi_3^{-1}(\frac{1}{Na})} \leq C_a
\]
where
$
C_a = \frac{a \Vert B_m\Vert}{\Vert B_M(\chi_{[0,a]},\chi_{[0,a]})\Vert_1} > 0.
$
This completes the proof.
\end{proof}

Note that, if we take $\Phi_i(x)=\vert x\vert^{p_i}$ for $i=1,2,3$, then $L^{\Phi_i}(\R)=L^{p_i}(\R)$ and $\Phi_i^{-1}(x)=\vert x\vert^{1/p_i}$. Theorem \ref{necessary cond1} becomes now $sup_{N\geq 1}(\frac{1}{Na})^{\frac{1}{p_1}+\frac{1}{p_2}-\frac{1}{p_3}}<\infty$. This gives the following corollary.

\begin{corollary}[\cite{B2}]
Let $p_1,p_2, p_3\geq 1$ such that $\frac{1}{p_1}+\frac{1}{p_2} < \frac{1}{p_3}$. Then $\tilde{M}_{(p_1 , p_2,p_3)}(\R)=\{0\}$.
\end{corollary}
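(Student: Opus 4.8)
The plan is to deduce this corollary directly from part (ii) of Theorem \ref{necessary cond1}, since the corollary is precisely the contrapositive statement in the special case of power functions. First I would record that for $\Phi_i(x)=|x|^{p_i}$ one has $L^{\Phi_i}(\R)=L^{p_i}(\R)$ and $\Phi_i^{-1}(x)=x^{1/p_i}$ for $x\ge 0$, so the quotient appearing in Theorem \ref{necessary cond1}(ii) becomes
\[
\frac{\Phi_1^{-1}\!\left(\tfrac{1}{Na}\right)\Phi_2^{-1}\!\left(\tfrac{1}{Na}\right)}{\Phi_3^{-1}\!\left(\tfrac{1}{Na}\right)}
=\left(\tfrac{1}{Na}\right)^{\frac{1}{p_1}+\frac{1}{p_2}-\frac{1}{p_3}}.
\]

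Next I would argue by contradiction: suppose $\tilde{M}_{(p_1,p_2,p_3)}(\R)\ne\{0\}$. Then Theorem \ref{necessary cond1}(ii) applies (with, say, $a=1$), giving a finite constant $C_1$ with
\[
\sup_{N\ge1}\left(\tfrac{1}{N}\right)^{\frac{1}{p_1}+\frac{1}{p_2}-\frac{1}{p_3}}\le C_1.
\]
Under the hypothesis $\tfrac{1}{p_1}+\tfrac{1}{p_2}<\tfrac{1}{p_3}$, the exponent $\delta:=\tfrac{1}{p_1}+\tfrac{1}{p_2}-\tfrac{1}{p_3}$ is strictly negative, so $(1/N)^{\delta}=N^{-\delta}=N^{|\delta|}\to\infty$ as $N\to\infty$, contradicting boundedness of the supremum. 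Hence no nonzero $M$ can lie in $\tilde{M}_{(p_1,p_2,p_3)}(\R)$, i.e. $\tilde{M}_{(p_1,p_2,p_3)}(\R)=\{0\}$.

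There is essentially no obstacle here: the content is entirely in Theorem \ref{necessary cond1}(ii), and the corollary is a one-line specialization. The only minor point worth care is that $\Phi_i(x)=|x|^{p_i}$ is genuinely a Young function exactly when $p_i\ge1$ (convexity), which is why the hypothesis $p_1,p_2,p_3\ge1$ is imposed; I would note this so the application of the theorem is legitimate. One could alternatively phrase the deduction without contradiction, directly observing that $\sup_N N^{|\delta|}=\infty$ when $\delta<0$, which is immediate.
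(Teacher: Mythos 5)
Your proposal is correct and is essentially the paper's own argument: the paper obtains the corollary by specializing Theorem \ref{necessary cond1}(ii) to $\Phi_i(x)=|x|^{p_i}$, noting the supremum becomes $\sup_{N\ge 1}(1/(Na))^{1/p_1+1/p_2-1/p_3}$, which is infinite when $1/p_1+1/p_2<1/p_3$. Your contrapositive phrasing and the remark about $p_i\ge 1$ ensuring $\Phi_i$ is a Young function are just minor elaborations of the same route.
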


Let us now use another approach following \cite{B2}  to get other necessary
conditions on multipliers.

\begin{lemma}\label{Lemma nonzero multiplier}
Let $M \in \tilde{M}_{(\Phi_1 , \Phi_2 , \Phi_3)}(\R)$ such that $F_M(\lambda)=\vert \int_\R e^{-\lambda^2 v^2} M(v) dv\vert <\infty$ for all $\lambda>0$. Then there exists a constant $A>0$ such that
\begin{equation}
A\lambda F_M(\lambda)  \leq   C_{\Phi_1}(1/\lambda)  C_{\Phi_2}(1/\lambda) C_{\Phi_3}(\lambda), \quad \lambda>0.
\end{equation}
\end{lemma}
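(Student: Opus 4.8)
The plan is to test the bilinear operator $B_M$ on a one-parameter family of Gaussians and extract the claimed inequality from the known norm estimates for the dilation operator. Recall from \eqref{form1} that, writing $K=\check M$, for $f,g\in\mathcal P(\R)$ one has $B_M(f,g)(x)=\int_\R f(x-t)g(x+t)K(t)\,dt$. A natural choice is $f=g=G$ where $G(x)=e^{-\pi x^2}$ (a Schwartz function, hence approximable by $\mathcal P(\R)$-functions, or one may first truncate $M$ to lie in $L^1$ via Proposition \ref{propi2} so that \eqref{form1} applies literally and then pass to the Gaussian by density). Plugging in, $B_M(G,G)(x)=\int_\R e^{-\pi(x-t)^2}e^{-\pi(x+t)^2}K(t)\,dt=e^{-2\pi x^2}\int_\R e^{-2\pi t^2}K(t)\,dt$. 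The integral $\int_\R e^{-2\pi t^2}K(t)\,dt$ is, by Parseval/the multiplication formula, equal to $\int_\R \widehat{(e^{-2\pi t^2})}(v)\,M(v)\,dv$ — a Gaussian integrated against $M$, so it is a constant multiple of $F_M(\lambda_0)$ for a fixed $\lambda_0$ up to normalisation constants. This already shows $B_M(G,G)$ is a fixed Gaussian times a constant proportional to a value of $F_M$; the point of the lemma is to let the width vary, which is done by dilating $G$.

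The key steps, in order: first, set $G_\lambda=D_\lambda G$ (or more precisely choose $f=g$ to be the Gaussian of variance proportional to $1/\lambda$ so that the computation above produces $F_M(\lambda)$ on the nose — one keeps careful track of which Gaussian's Fourier transform is $e^{-\lambda^2 v^2}$ and normalises accordingly). Repeating the computation, $B_M(G_\lambda,G_\lambda)$ equals a Gaussian (again of width governed by $\lambda$) multiplied by the scalar $\int_\R e^{-\lambda^2 v^2}M(v)\,dv$, whose absolute value is $F_M(\lambda)$. Second, apply the boundedness of $M\in\tilde{\mathcal M}_{(\Phi_1,\Phi_2,\Phi_3)}(\R)$: $N_{\Phi_3}(B_M(G_\lambda,G_\lambda))\le \|B_M\|\,N_{\Phi_1}(G_\lambda)N_{\Phi_2}(G_\lambda)$. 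Third, evaluate each norm. On the right, $N_{\Phi_i}(G_\lambda)=N_{\Phi_i}(D_\lambda G)\le C_{\Phi_i}(\lambda)N_{\Phi_i}(G)$ for $i=1,2$; note $C_{\Phi_i}(\lambda)$ here, and since $G_\lambda$ has width $\sim 1/\lambda$ one gets the factors $C_{\Phi_1}(1/\lambda)C_{\Phi_2}(1/\lambda)$ after matching the scaling conventions. On the left, $B_M(G_\lambda,G_\lambda)=F_M(\lambda)\cdot(\text{Gaussian of width}\sim 1/\lambda)$ up to sign, so $N_{\Phi_3}(B_M(G_\lambda,G_\lambda))=F_M(\lambda)\,N_{\Phi_3}(\text{that Gaussian})$, and the latter norm, being that of a dilate of a fixed Schwartz function, is comparable to $\lambda^{-1}$ times $C_{\Phi_3}(\lambda)^{-1}$ — more directly, one invokes Proposition \ref{p1} or Lemma \ref{l1} applied to a box approximating the Gaussian to get a lower bound of the form $c\,\lambda\,[\text{something}]^{-1}$; rearranging yields $A\lambda F_M(\lambda)\le C_{\Phi_1}(1/\lambda)C_{\Phi_2}(1/\lambda)C_{\Phi_3}(\lambda)\|B_M\|$, and absorbing $\|B_M\|$ and the fixed Gaussian norms into the constant $A$ gives the statement.

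I expect the main obstacle to be bookkeeping rather than conceptual: correctly tracking the several dilation parameters so that the width of $B_M(G_\lambda,G_\lambda)$, the arguments of $C_{\Phi_1},C_{\Phi_2},C_{\Phi_3}$, and the exact normalisation of $F_M(\lambda)=|\int_\R e^{-\lambda^2 v^2}M(v)\,dv|$ all line up with the asserted inequality. In particular one must be careful that the $L^{\Phi_3}$-norm of the output Gaussian produces a lower bound proportional to $\lambda/C_{\Phi_3}(\lambda)^{\pm 1}$ with the correct exponent sign — this is where one uses that $N_{\Phi_3}(D_\mu h)=N_{\Phi_3,\mu}(h)$ from \eqref{dil0} together with the two-sided estimates \eqref{in1}, or equivalently the relation $C_\Phi(t)=h_{L^\Phi}(1/t)$ noted earlier. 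A secondary technical point is justifying \eqref{form1} for the Gaussian: either reduce to $M\in L^1(\R)$ first using Proposition \ref{propi2} (convolving $M$ with an approximate identity changes $F_M$ continuously and does not increase the multiplier norm), or approximate $G$ in $L^{\Phi_i}$ by functions in $\mathcal P(\R)$ when the $\Phi_i$ satisfy $\Delta_2$; both routes are routine given the results already established.
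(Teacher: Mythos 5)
Your proposal follows essentially the same route as the paper: the paper also tests $B_M$ on a one-parameter family of Gaussians $G_\lambda$ (with $\hat G(\xi)=e^{-2\xi^2}$), obtains the factorization $B_M(G_\lambda,G_\lambda)=\tfrac12\,G_{\lambda/\sqrt2}\cdot\int_\R e^{-\lambda^2v^2}M(v)\,dv$, and then concludes from the multiplier bound together with $N_{\Phi_i}(G_\lambda)\le \lambda^{-1}C_{\Phi_i}(1/\lambda)N_{\Phi_i}(G)$ for $i=1,2$ and the reverse dilation estimate $N_{\Phi_3}(G_{\lambda/\sqrt2})\ge \sqrt2\,N_{\Phi_3}(G)/\bigl(\lambda\,C_{\Phi_3}(1/\sqrt2)C_{\Phi_3}(\lambda)\bigr)$ — exactly the bookkeeping you outline. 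The only (cosmetic) difference is that the paper works directly from the representation \eqref{exp1}, where the separation of variables is immediate, rather than passing through \eqref{form1} and a reduction to $M\in L^1(\R)$.
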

\begin{proof}
Let $\lambda>0$ and recall that $G_\lambda(x)=\frac{1}{\lambda}G(\frac{x}{\lambda})$ with $\widehat {G_\lambda}= D_\lambda \hat G$. Take $G$  such that $\hat{G}(\xi)=e^{-2\xi^2}$. Using  formula (\ref{exp1}) one has

	\begin{eqnarray*}
B_M(G_\lambda, G_\lambda)(x)
&=& \frac{1}{2} \int_\R \int_\R \hat{G}(\frac{u+v}{2}\lambda) \hat{G}(\frac{u-v}{2}\lambda) M(v) e^{2\pi iux} du dv  \\
&=& \frac{1}{2} (\int_\R e^{-\lambda^2 u^2} e^{2\pi iux} du) (\int_\R e^{-\lambda^2 v^2} M(v)dv)  \\
&=& \frac{1}{2} G_{\lambda/\sqrt 2}(x) (\int_\R e^{-\lambda^2 v^2} M(v)dv)
	\end{eqnarray*}

Since $M \in \tilde{M}_{(\Phi_1 , \Phi_2 , \Phi_3 )}(\R)$ we have

	\begin{equation}
\frac{1}{2} N_{\Phi_3}(G_{\lambda/\sqrt 2}) F_M(\lambda) \leq  \Vert M\Vert_{(\Phi_1 , \Phi_2 , \Phi_3 )} N_{\Phi_1}(G_\lambda ) N_{\Phi_2}(G_\lambda).
	\end{equation}

Since $$ N_{\Phi_i}(G_\lambda)\le \frac{C_{\Phi_i}(1/\lambda)}{\lambda} N_\Phi(G), \quad i=1,2$$
 and, using $G= D_{\lambda/\sqrt 2}D_{\sqrt 2/\lambda} G$, also  $$\frac{ \sqrt 2N_{\Phi_3}(G)}{\lambda C_{\Phi_3}(1/\sqrt 2) C_{\Phi_3}(\lambda)}\le \frac{ \sqrt 2N_{\Phi_3}(G)}{\lambda C_{\Phi_3}(\lambda/\sqrt 2)}\le N_{\Phi_3}(G_{\lambda/\sqrt 2})$$

We can write
	$$
 \frac{ \sqrt 2N_{\Phi_3}(G)}{2\lambda C_{\Phi_3}(1/\sqrt 2) C_{\Phi_3}(\lambda)} F_M(\lambda)  \leq  \frac{1}{\lambda^2} \Vert M\Vert_{(\Phi_1 , \Phi_2 , \Phi_3 )} C_{\Phi_1}(1/\lambda) N_{\Phi_1}(G) C_{\Phi_2}(1/\lambda) N_{\Phi_2}(G).
	$$

Hence we have
\begin{equation}
A\lambda F_M(\lambda)  \leq   C_{\Phi_1}(1/\lambda)  C_{\Phi_2}(1/\lambda) C_{\Phi_3}(\lambda)
	\end{equation}
for some constant $A>0$.
\end{proof}

\begin{theorem}
If there exists a non-zero continuous and integrable function $M\in \hat{M}_{(\Phi_1 , \Phi_2 , \Phi_3)}(\R)$ then
	\begin{equation} \label{final1}
 \liminf_{\lambda\to 0}  C_{\Phi_1}(\lambda)  C_{\Phi_2}(\lambda) C_{\Phi_3}(\frac{1}{\lambda}) > 0
	\end{equation}
and

	\begin{equation}\label{final2}
 \liminf_{\lambda\to \infty} \lambda C_{\Phi_1}(\lambda)  C_{\Phi_2}(\lambda) C_{\Phi_3}(\frac{1}{\lambda}) > 0
	\end{equation}
\end{theorem}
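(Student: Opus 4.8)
The plan is to apply Lemma \ref{Lemma nonzero multiplier} and then extract the two asymptotic statements by a change of variable. First I would note that the hypothesis $M\in\mul$ is continuous and integrable, so the Gaussian integral $F_M(\lambda)=|\int_\R e^{-\lambda^2 v^2}M(v)\,dv|$ is finite for every $\lambda>0$, and moreover $F_M(\lambda)\to |\int_\R M(v)\,dv|$ or, if that vanishes, stays comparable to a fixed positive constant on bounded sets of $\lambda$; the essential point is that $F_M$ does not vanish identically. By the dominated convergence theorem $F_M(\lambda)\to F_M(0^+):=|\int_\R M|$ as $\lambda\to 0$; if $F_M(0^+)>0$ we are done for the $\lambda\to 0$ regime immediately, and if $F_M(0^+)=0$ one uses that $M\ne 0$ to locate some $\lambda_0$ with $F_M(\lambda_0)>0$ and argues by continuity on a neighbourhood. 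For the $\lambda\to\infty$ regime one substitutes $v=w/\lambda$ to see $\lambda F_M(\lambda)=|\int_\R e^{-w^2}M(w/\lambda)\,dw|\to |M(0)|\cdot\sqrt\pi$ by continuity of $M$ at $0$ and dominated convergence, so $\lambda F_M(\lambda)$ has a positive limit provided $M(0)\ne 0$; if $M(0)=0$ one replaces $M$ by a suitable translate $\tau_c M$, which by Proposition \ref{1}(b) stays in $\mul$ with the same norm, to arrange $M(0)\ne 0$.

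Given these facts, Lemma \ref{Lemma nonzero multiplier} supplies the inequality
$$ A\lambda F_M(\lambda)\le C_{\Phi_1}(1/\lambda)\,C_{\Phi_2}(1/\lambda)\,C_{\Phi_3}(\lambda),\quad \lambda>0, $$
for some $A>0$. To get \eqref{final1} I would apply this with $\lambda$ replaced by $1/\lambda$ and let $\lambda\to 0$ (equivalently $1/\lambda\to\infty$): the right side becomes $C_{\Phi_1}(\lambda)\,C_{\Phi_2}(\lambda)\,C_{\Phi_3}(1/\lambda)$ and the left side is $A\lambda^{-1}F_M(1/\lambda)$, which by the $\lambda\to\infty$ analysis of $\lambda F_M(\lambda)$ above tends to $A\sqrt\pi\,|M(0)|>0$ (after the translation normalization). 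Hence $\liminf_{\lambda\to 0}C_{\Phi_1}(\lambda)C_{\Phi_2}(\lambda)C_{\Phi_3}(1/\lambda)>0$, which is \eqref{final1}. For \eqref{final2} I would use the lemma directly as stated and let $\lambda\to\infty$: then $\lambda C_{\Phi_1}(1/\lambda)C_{\Phi_2}(1/\lambda)C_{\Phi_3}(\lambda)\ge A\lambda^2 F_M(\lambda)$, but this overshoots. Instead one keeps $\lambda F_M(\lambda)\ge$ positive, i.e. $\lambda F_M(\lambda)\to\sqrt\pi|M(0)|$, giving $C_{\Phi_1}(1/\lambda)C_{\Phi_2}(1/\lambda)C_{\Phi_3}(\lambda)\ge A\lambda F_M(\lambda)$, and then substitutes $\lambda\mapsto 1/\lambda$ once more to read off $\liminf_{\lambda\to\infty}\lambda\, C_{\Phi_1}(\lambda)C_{\Phi_2}(\lambda)C_{\Phi_3}(1/\lambda)\ge A\sqrt\pi|M(0)|>0$, which is \eqref{final2}. (One should be careful to track which of the two forms of the lemma's inequality — as stated, or with $\lambda\leftrightarrow 1/\lambda$ — produces each of the two displayed conclusions, matching the extra factor of $\lambda$ in \eqref{final2}.)

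The main obstacle, and the only genuinely non-routine point, is handling the possibility that the naive limiting quantities $\int_\R M$ or $M(0)$ vanish, so that one cannot simply take a limit of $F_M(\lambda)$ or $\lambda F_M(\lambda)$ and conclude it is positive. The fix is the reduction step: using Proposition \ref{1}(b) one may replace $M$ by $\tau_c M$ for a well-chosen $c$ without changing membership in $\mul$ nor the multiplier norm; since $M$ is a non-zero continuous function, a translate $\tau_c M$ with $(\tau_c M)(0)=M(-c)\ne 0$ exists, and this is all that is needed. One also needs the elementary observation that $C_{\Phi_3}(\lambda/\sqrt 2)\ge C_{\Phi_3}(\lambda)C_{\Phi_3}(1/\sqrt2)^{-1}\cdot$ something is harmless because the constant $C_{\Phi_3}(1/\sqrt 2)$ is absorbed into $A$ — this is already built into the statement of Lemma \ref{Lemma nonzero multiplier}, so no further work is required there. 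Everything else is dominated convergence applied to Gaussians and bookkeeping of the dilation constants $C_{\Phi_i}$.

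\begin{proof}
By Proposition \ref{propi2} we may assume $M\in L^1(\R)$. Since $M$ is continuous and integrable, $F_M(\lambda)=|\int_\R e^{-\lambda^2 v^2}M(v)\,dv|<\infty$ for all $\lambda>0$, so Lemma \ref{Lemma nonzero multiplier} applies and yields a constant $A>0$ with
\begin{equation}\label{keyineq}
A\lambda F_M(\lambda)\le C_{\Phi_1}(1/\lambda)\,C_{\Phi_2}(1/\lambda)\,C_{\Phi_3}(\lambda),\quad \lambda>0.
\end{equation}
Since $M$ is a non-zero continuous function, there is $c\in\R$ with $M(-c)\ne 0$; replacing $M$ by $\tau_c M$, which lies in $\mul$ with $\|\tau_c M\|_{(\Phi_1,\Phi_2,\Phi_3)}=\|M\|_{(\Phi_1,\Phi_2,\Phi_3)}$ by Proposition \ref{1}(b), we may assume $M(0)\ne 0$. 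The change of variable $v=w/\lambda$ gives $\lambda F_M(\lambda)=|\int_\R e^{-w^2}M(w/\lambda)\,dw|$, and by continuity of $M$ at $0$ and dominated convergence, $\lim_{\lambda\to\infty}\lambda F_M(\lambda)=\sqrt\pi\,|M(0)|>0$; similarly, dominated convergence gives $\lim_{\lambda\to 0}F_M(\lambda)=|\int_\R M(v)\,dv|$, but we shall only need the first limit.

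To prove \eqref{final2}, replace $\lambda$ by $1/\lambda$ in \eqref{keyineq}: for $\lambda>0$,
\begin{equation}\label{sub1}
\frac{A}{\lambda}\,F_M(1/\lambda)\le C_{\Phi_1}(\lambda)\,C_{\Phi_2}(\lambda)\,C_{\Phi_3}(1/\lambda),
\end{equation}
whence $\lambda\,C_{\Phi_1}(\lambda)C_{\Phi_2}(\lambda)C_{\Phi_3}(1/\lambda)\ge A\lambda\cdot\lambda^{-1}F_M(1/\lambda)\cdot\lambda = A\,\mu F_M(\mu)$ with $\mu=1/\lambda$; more directly, multiplying \eqref{sub1} by $\lambda$ gives $\lambda\,C_{\Phi_1}(\lambda)C_{\Phi_2}(\lambda)C_{\Phi_3}(1/\lambda)\ge A F_M(1/\lambda)\cdot\lambda= A\,(1/\mu)F_M(\mu)$ is not quite it; instead observe $\lambda\cdot\frac{A}{\lambda}F_M(1/\lambda)=A F_M(1/\lambda)=A\mu^{-1}\cdot\mu F_M(\mu)$, so
\[
\lambda\,C_{\Phi_1}(\lambda)C_{\Phi_2}(\lambda)C_{\Phi_3}(1/\lambda)\ge A\,\mu F_M(\mu),\qquad \mu=1/\lambda.
\]
Letting $\lambda\to\infty$, i.e. $\mu\to 0$, we would need $\mu F_M(\mu)$ bounded below, which instead follows from the $\mu\to\infty$ computation after a further substitution; to avoid this detour we argue cleanly as follows. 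From \eqref{keyineq}, $C_{\Phi_1}(1/\lambda)C_{\Phi_2}(1/\lambda)C_{\Phi_3}(\lambda)\ge A\lambda F_M(\lambda)$, and $\lambda F_M(\lambda)\to\sqrt\pi|M(0)|>0$ as $\lambda\to\infty$; substituting $\lambda\mapsto1/\lambda$,
\[
C_{\Phi_1}(\lambda)C_{\Phi_2}(\lambda)C_{\Phi_3}(1/\lambda)\ge \frac{A}{\lambda}F_M(1/\lambda)=\frac{A}{\lambda^2}\cdot\Big(\frac1\lambda\Big)^{-1}F_M(1/\lambda),
\]
so $\lambda\,C_{\Phi_1}(\lambda)C_{\Phi_2}(\lambda)C_{\Phi_3}(1/\lambda)\ge A\lambda^{-1}F_M(1/\lambda)\cdot\lambda=A\,\big(\tfrac1\lambda\big)^{-1}F_M\big(\tfrac1\lambda\big)$ as $\lambda\to\infty$ tends to a positive limit, giving \eqref{final2}. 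Finally, for \eqref{final1}, from \eqref{keyineq} and $\lambda F_M(\lambda)\ge 0$ we have $C_{\Phi_1}(1/\lambda)C_{\Phi_2}(1/\lambda)C_{\Phi_3}(\lambda)\ge A\lambda F_M(\lambda)$; substituting $\lambda\mapsto 1/\lambda$ and letting $\lambda\to 0$ (so $1/\lambda\to\infty$) the right side tends to $A\sqrt\pi|M(0)|>0$, which is \eqref{final1}.
\end{proof}
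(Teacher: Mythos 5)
Your treatment of \eqref{final1} is correct and essentially the paper's argument: normalize by a translation so that $M(0)\ne 0$ (the paper instead applies Lemma \ref{Lemma nonzero multiplier} to $M(y-\cdot)$ at a point $y$ where $M(y)\ne 0$, which is the same idea), and use that the Gaussian approximate identity gives $\lambda F_M(\lambda)\to\sqrt{\pi}\,|M(0)|$ as $\lambda\to\infty$; your appeal to dominated convergence for this limit is a little loose if $M$ is unbounded, but the standard near/far splitting of the approximate identity, using only $M\in L^1$ and continuity at the point, repairs it, so this is a minor point.

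The proof of \eqref{final2}, however, has a genuine gap. The only asymptotic information you establish about $F_M$ is the $\lambda\to\infty$ limit $\lambda F_M(\lambda)\to\sqrt{\pi}|M(0)|$, and that can only yield \eqref{final1}. Indeed, the lemma's inequality $A\lambda F_M(\lambda)\le C_{\Phi_1}(1/\lambda)C_{\Phi_2}(1/\lambda)C_{\Phi_3}(\lambda)$, after substituting $\lambda\mapsto 1/\lambda$ and multiplying by $\lambda$, reads $\lambda\, C_{\Phi_1}(\lambda)C_{\Phi_2}(\lambda)C_{\Phi_3}(1/\lambda)\ge A\,F_M(1/\lambda)$; your final display contains an algebra slip ($A\lambda^{-1}F_M(1/\lambda)\cdot\lambda = A F_M(1/\lambda)$, not $A(1/\lambda)^{-1}F_M(1/\lambda)$), and even in corrected form the lower bound tends, as $\lambda\to\infty$, to $A\,\vert\int_\R M\vert$, which may perfectly well be zero; translating $M$ does not change $\int_\R M$, so nothing in your reduction excludes this. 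In effect you conflate the behaviour of $\mu F_M(\mu)$ as $\mu\to\infty$ with that of $\mu^{-1}F_M(\mu)$ as $\mu\to 0$, and your fallback (finding one $\lambda_0$ with $F_M(\lambda_0)>0$ and using continuity nearby) says nothing about a liminf as the parameter tends to $0$. What \eqref{final2} needs is positivity of $F$ in the $\lambda\to 0$ regime for a suitable representative: since $0\ne M\in L^1(\R)$, its Fourier transform is continuous and not identically zero, so choose $y$ with $\hat M(y)\ne 0$ and replace $M(\xi)$ by $e^{-2\pi i y\xi}M(\xi)$, which remains in $\tilde{\mathcal M}_{(\Phi_1,\Phi_2,\Phi_3)}(\R)$ with the same norm by Proposition \ref{1}(c); then $F(\lambda)=\vert\int_\R e^{-\lambda^2\xi^2}e^{-2\pi i y\xi}M(\xi)\,d\xi\vert\to\vert\hat M(y)\vert>0$ as $\lambda\to 0$, and the lemma gives $\liminf_{\lambda\to 0}\tfrac{1}{\lambda}C_{\Phi_1}(1/\lambda)C_{\Phi_2}(1/\lambda)C_{\Phi_3}(\lambda)\ge A\vert\hat M(y)\vert>0$, which is exactly \eqref{final2}. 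This modulation step is how the paper closes the argument, and it is the missing idea in your proposal.
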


\begin{proof}
Let $y\in \R$ such that $M(y)\ne 0$. By using Lemma \ref{Lemma nonzero multiplier} to the function $M(y-\cdot) $ we obtain
	\begin{equation*}
A\lambda\vert  \int_\R e^{-\lambda^2 \xi^2} M(y-\xi)d\xi \vert \leq    C_{\Phi_1}(1/\lambda)  C_{\Phi_2}(1/\lambda) C_{\Phi_3}(\lambda).
	\end{equation*}

Therefore, using that $M\in C_0(\R)$, the convolution with approximation of the identity  and taking limits as $\lambda \to \infty$ one gets

	\begin{equation*}
\lim_{\lambda \to \infty} \vert \lambda \int_\R e^{-\lambda^2 \xi^2} M(y-\xi)d\xi \vert = \sqrt \pi\vert M(y)\vert >0.
	\end{equation*}
This gives  (\ref{final1}).

 Since $\hat M\ne 0$ there exists $y\in \R$ such that $\hat M(y)\ne 0$. Using again Lemma \ref{Lemma nonzero multiplier}, applied to $M_{-y}M$ we obtain

	\begin{equation*}
A\vert \int_\R e^{-\lambda^2 \xi^2} e^{-2\pi i \xi y} M(\xi)d\xi \vert \leq \frac{1}{\lambda} C_{\Phi_1}(1/\lambda)  C_{\Phi_2}(1/\lambda) C_{\Phi_3}(\lambda).
	\end{equation*}

Therefore, taking limits as  $\lambda \to 0$ we get

	\begin{equation*}
\lim_{\lambda \to 0} \vert \int_\R e^{-\lambda^2 \xi^2} e^{-2\pi i \xi y} M(\xi)d\xi \vert = \vert \hat{M}(y)\vert >0.
	\end{equation*}
Hence we get (\ref{final2}).
\end{proof}

\begin{corollary}\label{if alpha=0 or beta=0 then M=0}
Let $\Phi_1 , \Phi_2 , \Phi_3$ be Young functions and let
	\begin{equation*}
\alpha = \liminf_{\lambda\to 0}  C_{\Phi_1}(\lambda)  C_{\Phi_2}(\lambda) C_{\Phi_3}(\frac{1}{\lambda})
	\end{equation*}
and

	\begin{equation*}
\beta = \liminf_{\lambda\to \infty}  \lambda C_{\Phi_1}(\lambda)  C_{\Phi_2}(\lambda) C_{\Phi_3}(\frac{1}{\lambda})
	\end{equation*}

If $\alpha=0$ or $\beta=0$ then $\tilde{M}_{(\Phi_1 , \Phi_2 , \Phi_3)}(\R) = \{0\}$.
\end{corollary}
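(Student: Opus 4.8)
The plan is to establish the contrapositive statement: if $\mul\ne\{0\}$ then $\alpha>0$ and $\beta>0$. Observe that $\alpha>0$ is exactly inequality \eqref{final1} and $\beta>0$ is exactly \eqref{final2}, and the theorem preceding this corollary already yields both of them --- but only under the hypothesis that $\mul$ contains a non-zero function that is simultaneously continuous and integrable. Hence the whole task reduces to showing that as soon as $\mul$ is non-trivial it already contains such a function; this is the same regularization idea used in the proof of Theorem \ref{necessary cond1}(ii) and it rests on Proposition \ref{propi2}.

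So I would begin with an arbitrary $0\ne M\in\mul$ and apply Proposition \ref{propi2} twice, in this order. Since $M$ is a non-zero element of $L^1_{loc}(\R)$, there is a bounded interval $I$ with $\int_I|M|>0$; choose $\phi\in L^1(\R)$ whose Fourier transform $\hat\phi$ belongs to $C_c^\infty(\R)$ and equals $1$ on $I$. Part (b) of Proposition \ref{propi2} gives $\hat\phi M\in\mul$, and $\hat\phi M$ is non-zero, compactly supported and locally integrable, hence $\hat\phi M\in L^1(\R)$. Next, let $(\psi_\delta)_{\delta>0}$ be a Gaussian approximate identity; part (a) of Proposition \ref{propi2} gives $\psi_\delta*(\hat\phi M)\in\mul$ for each $\delta>0$, and since $\psi_\delta$ is bounded, continuous and vanishes at infinity while $\hat\phi M\in L^1(\R)$, each $\psi_\delta*(\hat\phi M)$ is continuous, vanishes at infinity and lies in $L^1(\R)$. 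Because $\psi_\delta*(\hat\phi M)\to\hat\phi M\ne 0$ in $L^1(\R)$ as $\delta\to 0$, we may fix $\delta$ with $\psi_\delta*(\hat\phi M)\ne 0$; this is the desired non-zero continuous integrable element of $\mul$.

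Applying the preceding theorem to this function yields $\alpha>0$ and $\beta>0$, i.e.\ the contrapositive of the corollary, so $\alpha=0$ or $\beta=0$ forces $\mul=\{0\}$. The only genuinely substantive point is the two-step reduction in the middle paragraph: one must be careful to multiply by $\hat\phi$ first, so that the object becomes genuinely integrable, before smoothing by convolution with $\psi_\delta$, and to keep track that neither operation can destroy non-triviality for a suitable choice of $I$ and $\delta$. Everything after that is a verbatim appeal to the theorem just proved.
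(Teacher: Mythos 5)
Your proof is correct and follows the route the paper intends: the corollary is the contrapositive of the preceding theorem, and the only thing to add is the regularization via Proposition \ref{propi2} (truncating with a compactly supported $\hat\phi$ and then smoothing with an approximate identity) to pass from an arbitrary non-zero $M\in\mul$ to a non-zero continuous integrable one --- exactly the reduction the paper itself invokes in the proof of Theorem \ref{necessary cond1}(ii) and leaves implicit here. Your care about the order of the two operations (multiply by $\hat\phi$ before convolving) is a sensible explicit touch, but the argument is essentially the paper's.
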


\begin{corollary} (see \cite{B2, V}) Let $1\le p_i<\infty$ for $i=1,2,3$. If $\tilde{\mathcal M}_{(p_1 , p_2 , p_3)}(\R) \ne \{0\}$ then
$$ \frac{1}{p_3}\le \frac{1}{p_1}+\frac{1}{p_2}\le \frac{1}{p_3}+1.$$
\end{corollary}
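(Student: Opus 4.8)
The plan is to specialize the two necessary conditions already established --- Theorem~\ref{necessary cond1}(ii) and Corollary~\ref{if alpha=0 or beta=0 then M=0} --- to the power functions $\Phi_i(x)=|x|^{p_i}$. For these one has $\Phi_i^{-1}(x)=x^{1/p_i}$, and since $\Phi_i$ is submultiplicative with $\Phi_i(1)=1$, the Remark following the Corollary to Theorem~\ref{submul} gives the \emph{exact} dilation norm $C_{\Phi_i}(\lambda)=\lambda^{-1/p_i}$. With these two facts in hand the corollary reduces to arithmetic.

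For the left-hand inequality $\frac{1}{p_3}\le\frac{1}{p_1}+\frac{1}{p_2}$ I would read off Theorem~\ref{necessary cond1}(ii): with $\Phi_i(x)=|x|^{p_i}$ its conclusion becomes
$$\sup_{N\ge 1}(Na)^{-\left(\frac{1}{p_1}+\frac{1}{p_2}-\frac{1}{p_3}\right)}<\infty,$$
which is violated by letting $N\to\infty$ as soon as $\frac{1}{p_1}+\frac{1}{p_2}-\frac{1}{p_3}<0$; hence $\frac{1}{p_3}\le\frac{1}{p_1}+\frac{1}{p_2}$. (This is of course exactly the contrapositive of the preceding Corollary taken from \cite{B2}; alternatively it follows from the quantity $\alpha=\liminf_{\lambda\to 0}\lambda^{-\frac{1}{p_1}-\frac{1}{p_2}+\frac{1}{p_3}}$ in Corollary~\ref{if alpha=0 or beta=0 then M=0}, which is $0$ unless the same inequality holds.)

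For the right-hand inequality $\frac{1}{p_1}+\frac{1}{p_2}\le\frac{1}{p_3}+1$ I would invoke Corollary~\ref{if alpha=0 or beta=0 then M=0}. Substituting the exact dilation norms and $C_{\Phi_3}(1/\lambda)=\lambda^{1/p_3}$ gives
$$\beta=\liminf_{\lambda\to\infty}\lambda\,C_{\Phi_1}(\lambda)C_{\Phi_2}(\lambda)C_{\Phi_3}(1/\lambda)=\liminf_{\lambda\to\infty}\lambda^{\,1-\frac{1}{p_1}-\frac{1}{p_2}+\frac{1}{p_3}}.$$
If $1-\frac{1}{p_1}-\frac{1}{p_2}+\frac{1}{p_3}<0$ then $\beta=0$, and Corollary~\ref{if alpha=0 or beta=0 then M=0} forces $\tilde{\mathcal M}_{(p_1,p_2,p_3)}(\R)=\{0\}$, contrary to hypothesis; hence $\frac{1}{p_1}+\frac{1}{p_2}\le\frac{1}{p_3}+1$. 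Combining the two inequalities finishes the proof.

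There is no genuine obstacle here: the argument is a direct substitution. The only point that must be handled with a little care is the evaluation of $C_{\Phi_i}(\lambda)$ --- one needs the exact identity $C_{\Phi_i}(\lambda)=\lambda^{-1/p_i}$ (valid because $|x|^{p_i}$ is submultiplicative and equals $1$ at $x=1$), not merely the two-sided estimate $\frac{\Phi^{-1}(1)}{\Phi^{-1}(\lambda)}\le C_\Phi(\lambda)\le\frac{1}{\Phi^{-1}(\lambda)}$ of the Corollary to Theorem~\ref{submul}; and one should invoke Proposition~\ref{propi2} to justify passing to an $L^1$ representative when applying Corollary~\ref{if alpha=0 or beta=0 then M=0}.
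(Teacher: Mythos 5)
Your proposal is correct and is essentially the paper's own argument: the paper also evaluates $C_{\Phi_i}(\lambda)=\lambda^{-1/p_i}$ for $\Phi_i(x)=|x|^{p_i}$ and reads both inequalities off Corollary \ref{if alpha=0 or beta=0 then M=0}, with $\alpha=\liminf_{\lambda\to 0}\lambda^{\frac{1}{p_3}-\frac{1}{p_1}-\frac{1}{p_2}}$ giving the left-hand inequality and $\beta=\liminf_{\lambda\to\infty}\lambda^{1+\frac{1}{p_3}-\frac{1}{p_1}-\frac{1}{p_2}}$ the right-hand one. Your deriving the left-hand inequality from Theorem \ref{necessary cond1}(ii) instead is an equivalent variant (it is exactly the corollary from \cite{B2} stated just before), and your cautionary remarks are harmless: the exact identity $C_{\Phi_i}(\lambda)=\lambda^{-1/p_i}$ does hold (indeed the two-sided bound already collapses to it since $\Phi_i^{-1}(1)=1$, and an upper bound would suffice anyway), and the reduction to a continuous integrable representative via Proposition \ref{propi2} is already built into Corollary \ref{if alpha=0 or beta=0 then M=0}.
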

\begin{proof} For $\Phi_i(x)=\vert x\vert^{p_i}$ for $i=1,2,3$ the dilation operator $D_\lambda$ has norm $C_{p_i}(\lambda)=\lambda^{-1/p_i}$ for $i=1,2,3$.
 In this case the constants $\alpha$ and $\beta$ in the Corollary \ref{if alpha=0 or beta=0 then M=0} become
\begin{equation}
\alpha = \liminf_{\lambda\to 0} \lambda^{\frac{1}{p_3}-\frac{1}{p_1}-\frac{1}{p_2}}
\end{equation}
and
\begin{equation}
\beta = \liminf_{\lambda\to \infty} \lambda^{1+\frac{1}{p_3}-\frac{1}{p_1}-\frac{1}{p_2}}.
\end{equation} Hence $\alpha=0$ and $\beta=0$ correspond to  $\frac{1}{p_3}>\frac{1}{p_1}+\frac{1}{p_2}$ and  $1+\frac{1}{p_3}<\frac{1}{p_1}+\frac{1}{p_2}$ respectively. The result now follows from Corollary \ref{if alpha=0 or beta=0 then M=0}.
\end{proof}

\begin{remark}
The reader is also referred to the work of S. Rodriguez \cite{R} where the existence of a non-zero bilinear multiplier on r.i Banach spaces (in particular to Orlicz spaces) is  related to Boyd indices of the spaces.
\end{remark}

{\it Acknowledgment:}
The second named author would like to express his gratitude to
the Department of Mathematical Analysis of Valencia University for their hospitality
during his stay in Valencia as a visiting researcher and to T{\"U}B{\'I}TAK for their support
to make this work. We both would like to thank M.J Carro for calling to our attention  \cite[Theorem 5.10]{R}.


\begin{thebibliography}{CoifRoch}

\bibitem{BS} Bennett,C, Sharpley, R. {\em Interpolation of operators.}
Academic Press, London [1988].

\bibitem{BL} Bergh, J., L\"ofstr\"om, J., {\em Interpolation spaces.}
Springer Verlag [1970].


\bibitem{BBCG}  Berkson, E., Blasco, O., Carro, M. and Gillespie, A. {\em Discretization and transference of bisublinear maximal operators.}
 J. Fourier Anal. and Appl.\textbf{12} [2006], 447-481.

\bibitem{BBCG2}  Berkson, E., Blasco, O., Carro, M. and Gillespie, A. {\em Discretization versus transference for bilinear operators.}
  Banach Spaces and their Applications in Analysis, 11-30 , De Gruyter [2007].


\bibitem{B}  Blasco,O. {\em Bilinear multipliers and transference.}
Int. J. Math. Math. Sci. \textbf{2005}(4) [2005], 545-554.

\bibitem{B1}  Blasco,O. {\em Notes in transference of bilinear multipliers.}
Advanced Courses of Mathematical Analysis III,  Word Scientific , New Yersey (2008), pag 28-38

\bibitem{B2}  Blasco,O. {\em Notes on the spaces of bilinear multipliers.}
Rev. Unión Mat. Argentina {\bf 50} n.2 (2009), 20-34.





\bibitem{BCG}  Blasco, O., Carro, M., and Gillespie, A. {\em Bilinear Hilbert transform on measure spaces.}
J. Fourier Anal. and Appl.\textbf{11} [2005], 459-470.


\bibitem{BV} Blasco, O., Villarroya, F., {\em Transference of bilinear multipliers on Lorentz
spaces.} Illinois J. Math. {\bf 47}(4),[2005], 1327-1343



 \bibitem{CM} Coifman R.R., Meyer, Y. {\em Fourier Analysis of multilinear convolution, Calder\'on theorem and analysis of Lipschitz curves.}
 Euclidean Harmonic Analysis (Proc. Sem. Univ. Maryland, College
 Univ., Md) Lecture Notes in Maths.
  {\bf 779}, [1979], 104-122.




\bibitem{FS}  Fan, D., Sato, S., {\em Transference of certain multilinear multipliers operators.} J. Austral. Math. Soc. {\bf 70}, [2001], pp. 37-55.




\bibitem{GN1}  Gilbert, J, Nahmod, A.,\emph{Bilinear operators with non-smooth symbols, }J. Fourier Anal.  Appl., {\bf 7}[2001],   pp. 435-467.

\bibitem{GN2}  Gilbert, J, Nahmod, A.,\emph{Boundedness Bilinear operators with non-smooth symbols, }Mat. Res. Letters, {\bf 7}[2000],   pp. 767-778.





\bibitem{GK} Grafakos, L.,  Kalton, N.,  {\em Some remarks on multilinear maps and interpolation.} Math. Annalen {\bf 319} [2001], pp. 151-180


\bibitem{GT} Grafakos, L.,  Torres, R.,  {\em Multilinear Calder\'on-Zygmund theory.} Add. Math. {\bf 165} [2002], pp. 124-164.

\bibitem{KS} Kenig, C. E., Stein, E.M.,  {\em Multilinear estimates and fractional
integration}. Math. Res. Lett. {\bf 6} [1999],   pp. 1-15.






\bibitem{KG} Kulak, O., G\"urkanii, A.  {\em Bilinear multipliers on weighted Lebesgue spaces and variable exponent Lebesgue spaces}. J. Ineq. Appl. {\bf 259} [2013],   pp. 447-481.



\bibitem{L} Lacey M., {\em The bilinear maximal function maps into $L^p$  for $\frac{2}{3}<p\le 1 $ } Ann. Math. 151, [2000], pp. 35-57.



\bibitem{LT1} Lacey M., Thiele C., {\em $L^p$ estimates on the bilinear Hilbert
transform for $2<p<\infty $ } Annals Math. 146, [1997], pp.
693-724.


\bibitem{LT2}  Lacey, M. and Thiele, C. {\em  Weak bounds for the bilinear Hilbert
transform on $L^p$.} Documenta Mathematica, extra volume ICM
1-1000,  [1997].

\bibitem{LT3} Lacey, M., Thiele, C., {\em On Calder\'on's conjecture.}
Ann. Math. {\bf 149}  2 [1999] pp. 475-496.

\bibitem{ON} O'Neil R., {\em Fractional integration in Orlicz spaces I.}
Trans. Amer. Math. Soc. {\bf 115} [1965] pp. 300-328.


\bibitem{RR} Rao, M. M., Ren, Z.D., {\em Theory of Orlicz spaces.}
CRM Press [1991].


\bibitem{R} Rodriguez-L\'opez, S., {\em A homomorphism theorem for bilinear multipliers.}
J.London Math. Soc. {\bf 70}  2 [2013] pp. 619-636.


\bibitem{SW} Stein, E. M., Weiss, G., {\em Introduction to Fourier Analysis on Euclidean spaces.}
Princeton Univ. Press [1971].


\bibitem{V} Villarroya, P., {\em Bilinear multipliers on Lorentz spaces.}
Czechoslovac Math. J. {\bf 58} (133) [2008] n. 4, pp 1045-1057.

\end{thebibliography}
\end{document}